\documentclass[11pt]{article}

\usepackage[a4paper,margin=1.1in]{geometry}
\usepackage{amsmath,amssymb,amsthm,mathtools}
\usepackage{enumitem}
\usepackage[numbers,sort&compress]{natbib}
\usepackage[colorlinks=true,linkcolor=blue,citecolor=blue,urlcolor=blue]{hyperref}
\usepackage[capitalize,nameinlink]{cleveref}
\usepackage{aliascnt}
\theoremstyle{plain}
\newtheorem{theorem}{Theorem}[section]

\newaliascnt{lemma}{theorem}
\newtheorem{lemma}[lemma]{Lemma}
\aliascntresetthe{lemma}

\newaliascnt{fact}{theorem}
\newtheorem{fact}[fact]{Fact}
\aliascntresetthe{fact}

\newaliascnt{corollary}{theorem}
\newtheorem{corollary}[corollary]{Corollary}
\aliascntresetthe{corollary}

\newaliascnt{proposition}{theorem}

\aliascntresetthe{proposition}

\newaliascnt{question}{theorem}
\newtheorem{question}[question]{Question}
\aliascntresetthe{question}

\newaliascnt{conjecture}{theorem}

\aliascntresetthe{conjecture}

\theoremstyle{definition}
\newaliascnt{definition}{theorem}
\newtheorem{definition}[definition]{Definition}
\aliascntresetthe{definition}

\newtheoremstyle{boldremark}
  {3pt}{3pt}{\normalfont}{}{\bfseries}{.}{.5em}{}
\theoremstyle{boldremark}
\newaliascnt{remark}{theorem}
\newtheorem{remark}[remark]{Remark}
\aliascntresetthe{remark}

\newaliascnt{example}{theorem}
\newtheorem{example}[example]{Example}
\aliascntresetthe{example}

\crefname{theorem}{Theorem}{Theorems}
\Crefname{theorem}{Theorem}{Theorems}
\crefname{lemma}{Lemma}{Lemmas}
\Crefname{lemma}{Lemma}{Lemmas}
\crefname{fact}{Fact}{Facts}
\Crefname{fact}{Fact}{Facts}
\crefname{corollary}{Corollary}{Corollaries}
\Crefname{corollary}{Corollary}{Corollaries}
\crefname{proposition}{Proposition}{Propositions}
\Crefname{proposition}{Proposition}{Propositions}
\crefname{definition}{Definition}{Definitions}
\Crefname{definition}{Definition}{Definitions}
\crefname{remark}{Remark}{Remarks}
\Crefname{remark}{Remark}{Remarks}
\crefname{example}{Example}{Examples}
\Crefname{example}{Example}{Examples}

\newcommand{\C}{\mathcal C}
\newcommand{\DNN}{\mathsf{DNN}}

\newcommand{\1}{\mathbf 1}
\newcommand{\dd}{\,d}
\newcommand{\bbE}{\mathbb E}
\newcommand{\bbP}{\mathbb P}
\newcommand{\bbR}{\mathbb R}

\DeclareMathOperator{\Ent}{Ent}
\DeclareMathOperator{\Var}{Var}

\title{Tensor Amplification and Spectral Transfer for Sidorenko-Type Inequalities}
\author{Yuqi Zhao\thanks{Email: \texttt{yuqi.zhao012@gmail.com}.}}
\date{}

\begin{document}
\maketitle

\begin{abstract}
We develop a tensor-amplification framework for Sidorenko-type
inequalities in graphon classes.  The framework applies to any
admissible class, meaning a class closed under tensor powers and
normalized principal restrictions.  These two closure properties isolate
the structural input needed for the amplification arguments, while
preserving natural positivity constraints such as the doubly nonnegative
constraint.

For every admissible class $\C$, we prove two transfer principles.
First, equality cases regularize optimally: if a non-matching graph
$H$ is $\C$-Sidorenko, then every equality case
$t(H,W)=p(W)^{e(H)}$ with $W\in\C$ is regular.  Consequently,
relative forcing is equivalent to relative regular-forcing for every
non-matching $\C$-Sidorenko graph.  Second, in the range
$v(H)\le e(H)$, ordinary $\C$-Sidorenko is equivalent, as a
universal property over $\C$, to the spectral inequality
$t(H,W)\ge
\rho(W)^{2e(H)-v(H)}p(W)^{v(H)-e(H)}$ for every non-zero
$W\in\C$.  The spectral transfer is obtained from a Perron-biased
tensor regularization theorem detecting the Perron spectral radius on
the exponential scale.

We also prove quantitative near-equality variants and apply the
framework to doubly nonnegative graphons and bounded doubly nonnegative
kernels.  This yields spectral equivalences for Sidorenko-good graphs
in the range $v(F)\le e(F)$, and identifies Sidorenko-good forcing
with regular-KNRS forcing for non-matching Sidorenko-good graphs.
\end{abstract}
\section{Introduction}

A basic question in extremal graph theory asks how few copies of a fixed graph \(H\) a
dense host of prescribed edge density can contain.  In graphon language,
this asks for the minimum of \(t(H,W)\) over all graphons \(W\) with
\(t(K_2,W)=p\).  The constant graphon \(p\1\), corresponding to the
Erd\H{o}s--R\'enyi model of edge density \(p\), gives the value
\(p^{e(H)}\). Sidorenko's conjecture asserts
that this random construction is extremal for every bipartite graph
\(H\):
\begin{equation}\label{eq:sidorenko}
        t(H,W)\ge t(K_2,W)^{e(H)}
        \qquad\text{for every graphon } W.
\end{equation}
A graph satisfying \eqref{eq:sidorenko} is called Sidorenko.  The
conjecture goes back to work of Sidorenko and Erd\H{o}s--Simonovits
\cite{ErdosSimonovits1984,Sidorenko1991,Sidorenko1993}.  We use the
graphon formalism for dense graph limits throughout; see
\cite{LovaszSzegedy2006,Lovasz2012}.  The conjecture remains open in
general, but many cases and methods are known, including
norming graph inequalities, dependent-random-choice methods,
logarithmic and entropy methods, local results, tree-arrangeable
constructions, and blow-up operations; see, for example,
\cite{Hatami2010,ConlonFoxSudakov2010,LiSzegedy2026,KimLeeLee2016,
Lovasz2011,Szegedy2015,ConlonKimLeeLee2018,ConlonLee2021}.

The point of departure of this paper is the tensor-power mechanism
behind many Sidorenko-type arguments.  For a graphon \(W\) with
\(p(W)>0\), write
\[
        p(W)=t(K_2,W),
        \qquad
        R_H(W)=\frac{t(H,W)}{p(W)^{e(H)}}.
\]
Tensor powers satisfy
\begin{equation}\label{eq:intro-tensor-ratio}
        t(H,W^{\otimes k})=t(H,W)^k,\qquad
        p(W^{\otimes k})=p(W)^k,\qquad
        R_H(W^{\otimes k})=R_H(W)^k.
\end{equation}
Thus tensoring preserves the normalized logarithmic ratio
\(k^{-1}\log R_H(W^{\otimes k})\), or equivalently the logarithmic
Sidorenko exponent per tensor coordinate. Although this exponent is unchanged
by tensoring, tensor powers may amplify non-uniformity of the host.  By
passing to a suitable principal restriction inside a tensor power, one
may convert this amplified non-uniformity into a contradiction to the
Sidorenko inequality, or into a strengthened spectral lower bound.

We call this method \emph{tensor amplification}.  It has two forms in
this paper.  The first is \emph{degree-biased tensor amplification},
which detects failure of regularity through the degree-biased measure
\(\deg_W(x)\dd\mu(x)/p(W)\).  The second is \emph{Perron-biased tensor
amplification}, which detects the Perron spectral radius through the
Perron eigenfunction of the graphon operator.  Both forms use the same
structural operations: tensor powers and normalized principal
restrictions.

This motivates the following framework.  A graphon class \(\C\) is
called \emph{admissible} if it is closed under tensor powers
\(W\mapsto W^{\otimes k}\) and normalized principal restrictions
\(W\mapsto W[S]\); the formal definition is given in
\Cref{def:admissible}. In
particular, the class of doubly nonnegative graphons is admissible.

We first study equality cases.  A graph \(H\) is called
\(\C\)-Sidorenko if
\[
        t(H,W)\ge p(W)^{e(H)}
        \qquad\text{for every } W\in\C .
\]
Throughout the paper isolated vertices are deleted once and for all, so
\(v(H)\) denotes the number of non-isolated vertices of \(H\).  With
this convention \(2e(H)\ge v(H)\), and equality holds exactly for
matchings.

\begin{theorem}\label{thm:equality-regular}
Let \(\C\) be admissible, and let \(H\) be a finite graph with
\(2e(H)>v(H)\).  If \(H\) is \(\C\)-Sidorenko and \(W\in\C\) satisfies
\[
        t(H,W)=p(W)^{e(H)},
\]
then \(W\) is \(p(W)\)-regular.
\end{theorem}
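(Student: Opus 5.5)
The plan is to argue by contradiction, using degree-biased tensor amplification. Suppose \(W\in\C\) satisfies \(t(H,W)=p(W)^{e(H)}\) with \(p=p(W)>0\), but \(W\) is not \(p\)-regular. (If \(p(W)=0\) we read the theorem as excluding this case or treating it trivially.) Write \(d(x)=\deg_W(x)\) and let \(\nu=(d/p)\dd\mu\) be the degree-biased probability measure. Non-regularity means \(\nu\neq\mu\). Hence the relative entropy
\[
D=\int \tfrac{d}{p}\log\tfrac{d}{p}\dd\mu=\int\log\tfrac{d}{p}\dd\nu
\]
is strictly positive, possibly \(+\infty\); in that case we truncate and work with some finite positive value in place of \(D\). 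By \eqref{eq:intro-tensor-ratio}, \(W^{\otimes k}\in\C\) still satisfies \(t(H,W^{\otimes k})=p^{ke(H)}\). Its degree function is \(\prod_i d(x_i)\), and its degree-biased measure is \(\nu^{\otimes k}\).

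Fix \(\varepsilon\in(0,D)\) and define
\[
S_k=\Bigl\{x\in\Omega^k:\ \prod_{i=1}^k \tfrac{d(x_i)}{p}\ge e^{k(D-\varepsilon)}\Bigr\}.
\]
Two estimates hold for this set.
\begin{enumerate}[label=(\roman*)]
\item By Markov's inequality, using \(\int\prod_i (d(x_i)/p)\dd\mu^{\otimes k}=1\), we get \(\mu^{\otimes k}(S_k)\le e^{-k(D-\varepsilon)}\).
\item By the weak law of large numbers applied to the i.i.d.\ variables \(\log(d/p)\) under \(\nu\), we get \(\nu^{\otimes k}(S_k)\to1\). In particular \(\mu^{\otimes k}(S_k)>0\) for large \(k\), so \(W^{\otimes k}[S_k]\) is defined.
\end{enumerate}
The edge mass of \(S_k\) is controlled by removing the pairs with at least one coordinate outside \(S_k\):
\[
e_k:=\int_{S_k\times S_k}W^{\otimes k}\ \ge\ p^k-2\int_{S_k^c}\prod_i d(x_i)\dd\mu^{\otimes k}=p^k\bigl(1-2\nu^{\otimes k}(S_k^c)\bigr)=p^k(1-o(1)).
\]

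Next we apply the \(\C\)-Sidorenko property to \(W^{\otimes k}[S_k]\in\C\); this is where admissibility enters. Under the normalized restriction (measure rescaled by \(\mu^{\otimes k}(S_k)^{-1}\)),
\[
p\bigl(W^{\otimes k}[S_k]\bigr)=\frac{e_k}{\mu^{\otimes k}(S_k)^2},\qquad
t\bigl(H,W^{\otimes k}[S_k]\bigr)=\frac{1}{\mu^{\otimes k}(S_k)^{v(H)}}\int_{S_k^{V(H)}}\prod_{uv\in E(H)}W^{\otimes k}(x_u,x_v).
\]
The last integral is at most \(t(H,W^{\otimes k})=p^{ke(H)}\), by nonnegativity and because isolated vertices were deleted. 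The Sidorenko inequality for the restriction therefore gives
\[
p^{ke(H)}\ \ge\ e_k^{\,e(H)}\,\mu^{\otimes k}(S_k)^{v(H)-2e(H)}\ \ge\ p^{ke(H)}(1-o(1))^{e(H)}\,e^{k(2e(H)-v(H))(D-\varepsilon)}.
\]
Since \(2e(H)>v(H)\) and \(D-\varepsilon>0\), the right-hand side exceeds the left for large \(k\). This is the contradiction, so \(W\) must be \(p\)-regular.

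The main obstacle is making the entropy step rigorous. We must check that \(\nu\ne\mu\) forces \(D>0\), by strict convexity of \(t\log t\). We must also handle the case where \(\log(d/p)\) is not \(\nu\)-integrable or \(D=\infty\). There the law of large numbers is replaced by a truncated version: choose a finite level \(D'>0\) below the (possibly infinite) mean, and the concentration \(\nu^{\otimes k}(S_k)\to1\) still holds for that threshold. The remaining points are routine checks against \Cref{def:admissible}: the exact normalization convention for \(W[S]\), and that \(t\) of the restricted graphon is computed as above.
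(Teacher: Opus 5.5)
Your proof is correct, and its overall structure matches the paper's. You pass to $W^{\otimes k}$ and find a set of vanishing $\mu^{k}$-measure that carries almost all of the degree-biased measure $\nu^{k}$. You bound its edge mass by the marginal estimate $p^k(1-2\nu^{k}(S_k^c))$, apply the $\C$-Sidorenko inequality to the normalized restriction, and use $v(H)-2e(H)<0$.

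The one real difference is how the amplifying set is chosen. The paper (\Cref{lem:degree-amplification}) picks a single event $A$ with $\nu(A)>\mu(A)$ and thresholds the empirical frequency of $A$ among the coordinates. This uses only $\nu\neq\mu$ and gives only $\mu^{k}(T_k)\to0$, which is enough for the exact equality statement. You instead threshold the likelihood ratio $\prod_i d(x_i)/p$. Markov's inequality then gives $\mu^{k}(S_k)\le e^{-k(D-\varepsilon)}$ with $D=\Ent(\nu\mid\mu)$, which is the optimal Chernoff--Stein exponent.

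This choice gives you a quantitative statement at no extra cost. Run your argument with $t(H,W)=R\,p^{e(H)}$ instead of equality, and you get $\log R\ge(2e(H)-v(H))(D-\varepsilon)$ for every $\varepsilon>0$. That is exactly the entropy bound of \Cref{thm:degree-entropy-stability}, obtained without the finite-partition and type-class approximation the paper uses there. The paper's route, in exchange, needs no entropy at all for the qualitative result.

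Two small remarks. First, your truncation worry is unnecessary. Since $0\le d\le 1$, the ratio $d/p$ lies in $[0,1/p]$, where $t|\log t|$ and $t\log^2 t$ are bounded. So $D<\infty$, and $\log(d/p)$ has finite mean and variance under $\nu$, so the ordinary weak law applies. Second, state the case $p(W)=0$ rather than hedging: then $W=0$ almost everywhere, which is trivially $0$-regular.
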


The range is optimal.  If \(H=mK_2\) is a matching, then
\(t(H,W)=p(W)^m\) for every graphon \(W\), so no equality-case
regularization statement can hold.  Hence \Cref{thm:equality-regular}
applies precisely in the non-matching range.  Notice that this is
strictly larger than the spectral range considered below: non-matching
forests are included.

\begin{corollary}[Forcing versus regular-forcing]\label{cor:relative-forcing-regular}
Let \(\C\) be admissible.  If \(H\) is a non-matching
\(\C\)-Sidorenko graph, then \(H\) is \(\C\)-forcing if and only if it
is \(\C\)-regular-forcing.  In particular, for the class of all
graphons, every non-matching Sidorenko graph is forcing if and only if
it is regular-forcing.
\end{corollary}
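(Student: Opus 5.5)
The plan is to deduce \Cref{cor:relative-forcing-regular} directly from \Cref{thm:equality-regular}, once the two notions are fixed in the form I expect the paper to use. I take \(H\) to be \(\C\)-forcing if every \(W\in\C\) with \(t(H,W)=p(W)^{e(H)}\) is constant, i.e.\ \(W=p\1\) almost everywhere. I take \(H\) to be \(\C\)-regular-forcing if every \(p\)-regular \(W\in\C\) with \(t(H,W)=p(W)^{e(H)}\) is constant. Here \(p\)-regular means \(\deg_W(x)=p(W)\) for almost every \(x\). If the formal definitions in the paper instead phrase forcing in terms of sequences \(W_n\) with \(t(H,W_n)-p(W_n)^{e(H)}\to0\), the quantitative near-equality variants announced in the abstract would be needed. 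In that case I would run the same argument with ``regular'' replaced by ``degree-close to \(p\) in \(L^1\)''. For the exact statement I will use the equality-case formulation.

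\textbf{Forcing implies regular-forcing.} This direction is trivial. A \(p\)-regular equality case is in particular an equality case in \(\C\), so forcing makes it constant.

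\textbf{Regular-forcing implies forcing.} Assume \(H\) is non-matching and \(\C\)-Sidorenko. Then \(2e(H)>v(H)\) by the convention on isolated vertices, since equality there characterizes matchings. Let \(W\in\C\) satisfy \(t(H,W)=p(W)^{e(H)}\). \Cref{thm:equality-regular} applies and gives that \(W\) is \(p(W)\)-regular. Regular-forcing then yields \(W=p(W)\1\), so \(H\) is \(\C\)-forcing. I would also check the degenerate case \(p(W)=0\) separately. There \(W=0\) almost everywhere, which is already constant, so it causes no trouble whichever definition is in force.

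\textbf{The class of all graphons.} For the final sentence, I need that the class of all graphons is admissible. It is closed under tensor powers, since \(W^{\otimes k}\) is again a symmetric measurable \([0,1]\)-valued kernel on a probability space. It is closed under normalized principal restrictions, since restricting \(W\) to \(S\) with \(\mu(S)>0\) and renormalizing \(\mu\) gives another graphon. Moreover, \(\C\)-Sidorenko for this class is ordinary Sidorenko.

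The only real obstacle is bookkeeping. The step to watch is matching the paper's precise definitions of (relative) forcing and regular-forcing to the equality-case formulation above. All of the substance is carried by \Cref{thm:equality-regular}.
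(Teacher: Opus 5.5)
Your proposal is correct and follows the paper's proof essentially verbatim. The forward direction is immediate; for the converse, \Cref{thm:equality-regular} (applicable because non-matching means $2e(H)>v(H)$) makes every equality case regular, regular-forcing then gives constancy, and the final sentence is the case of the class of all graphons. Your guessed definitions coincide with the paper's \Cref{def:relative-forcing}, which uses exact equality cases, so the sequence-based variant you hedge against does not arise.
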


This fits naturally alongside regular reductions for Sidorenko's
conjecture itself.  Szegedy's information-theoretic framework shows
that Sidorenko's conjecture can be tested on transitive
hosts \cite{Szegedy2015}, and Coregliano and Razborov obtained
the biregular graphon reduction by a tensor-power
argument \cite{CoreglianoRazborov2021}.  These results concern the
ambient class of test objects: they show that, for the purpose of
proving the Sidorenko inequality, it is enough to work in a suitably
regular or symmetric setting.  Our result is complementary.  It assumes
the Sidorenko inequality in an admissible class and studies its equality
cases.  In this direction, \Cref{cor:relative-forcing-regular} shows
that equality itself eliminates degree irregularity: any obstruction to forcing must already occur
among regular graphons.

Our second main result concerns a spectral strengthening.  Let
\(\rho(W)\) denote the operator norm of the graphon operator \(T_W\).
For non-negative graphons, \(\rho(W)\) is the Perron spectral radius,
and \(p(W)\le \rho(W)\).  When \(v(H)\le e(H)\), we say that \(H\) is
\emph{spectrally \(\C\)-Sidorenko} if
\[
        t(H,W)
        \ge
        \rho(W)^{2e(H)-v(H)}p(W)^{v(H)-e(H)}
        \qquad\text{for every non-zero } W\in\C .
\]
For the constant graphon \(p\1\), the right-hand side is exactly
\(p^{e(H)}\).  Since \(p(W)\le\rho(W)\), this is genuinely stronger
than the ordinary Sidorenko inequality whenever \(2e(H)>v(H)\).

\begin{theorem}[Spectral transfer principle]\label{thm:main}
Let \(\C\) be admissible, and let \(H\) be a finite graph with
\(v(H)\le e(H)\).  Then \(H\) is \(\C\)-Sidorenko if and only if it is
spectrally \(\C\)-Sidorenko.  Equivalently, the following two
statements are equivalent:
\begin{enumerate}[label=(\roman*)]
    \item for every \(W\in\C\),
    \[
        t(H,W)\ge p(W)^{e(H)};
    \]
    \item for every non-zero \(W\in\C\),
    \[
        t(H,W)
        \ge
        \rho(W)^{2e(H)-v(H)}p(W)^{v(H)-e(H)}.
    \]
\end{enumerate}
\end{theorem}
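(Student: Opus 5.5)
The direction (ii)$\Rightarrow$(i) is immediate. If $W\in\C$ is zero then both sides of (i) vanish. Otherwise $p(W)>0$ for a non-negative graphon that is non-zero, and $p(W)\le\rho(W)$. Since $2e(H)-v(H)\ge 0$, we get
\[
\rho(W)^{2e(H)-v(H)}p(W)^{v(H)-e(H)}\ \ge\ p(W)^{2e(H)-v(H)}p(W)^{v(H)-e(H)}=p(W)^{e(H)} .
\]
The real content is (i)$\Rightarrow$(ii), which I would prove by Perron-biased tensor amplification. Fix a non-zero $W\in\C$ and write $e=e(H)$, $v=v(H)$.

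\textbf{Perron data and the tilted measure.} Since $T_W$ is a compact, self-adjoint operator with non-negative kernel, a Jentzsch/Krein--Rutman argument gives a non-negative eigenfunction $f$ with $T_Wf=\rho f$ and $\|f\|_2=1$, where $\rho=\rho(W)$. The measure $\nu(dx,dy)=W(x,y)f(x)f(y)\,dx\,dy/\rho$ is a probability measure on the square, and both of its marginals equal $f^2\,d\mu$. For the $k$-th tensor power, $f^{\otimes k}$ is a Perron eigenfunction of $W^{\otimes k}$ with eigenvalue $\rho^k$, and $\nu^{\otimes k}$ is the corresponding tilted measure. Under $\nu^{\otimes k}$ the quantity $\log f^{\otimes k}(x)=\sum_i\log f(x_i)$ is a sum of i.i.d.\ terms. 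The law of large numbers therefore concentrates it near $ka$, where $a=\int f^2\log f\,d\mu$, and this holds simultaneously for both coordinates of a $\nu^{\otimes k}$-sample.

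\textbf{The restriction and its bound.} Let
\[
S_k=\{x:\ e^{k(a-\varepsilon)}\le f^{\otimes k}(x)\le e^{k(a+\varepsilon)}\}, \qquad c=e^{ka}.
\]
By concentration, $\nu^{\otimes k}(S_k\times S_k)\to 1$ and $\int_{S_k}(f^{\otimes k})^2\to 1$. The second fact gives $\mu(S_k)=c^{-2}e^{O(\varepsilon k)}$. The first, together with the pointwise bound $f^{\otimes k}\le ce^{\varepsilon k}$ on $S_k$, gives
\[
\int_{S_k\times S_k}W^{\otimes k}\ \ge\ c^{-2}e^{-2\varepsilon k}\rho^k(1-o(1)).
\]
Hence the normalized restriction $W^{\otimes k}[S_k]\in\C$ satisfies
\[
p\bigl(W^{\otimes k}[S_k]\bigr)=\mu(S_k)^{-2}\int_{S_k\times S_k}W^{\otimes k}\ \ge\ \rho^k\mu(S_k)^{-1}e^{-O(\varepsilon k)} .
\]
Now I combine three facts: non-negativity gives $t(H,U)\ge\mu(S)^{v}\,t(H,U[S])$; hypothesis (i) applies to $W^{\otimes k}[S_k]$; and $t(H,W^{\otimes k})=t(H,W)^k$. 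Together they yield
\[
t(H,W)^k\ \ge\ \mu(S_k)^{v-e}\rho^{ke}e^{-O(\varepsilon k)}.
\]
Since $v-e\le0$ and $\mu(S_k)\le1$, this gives $t(H,W)\ge \rho^{e}e^{-O(\varepsilon)}$, and letting $\varepsilon\to0$ gives $t(H,W)\ge\rho^{e}$.

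\textbf{From $\rho^e$ to the claimed bound.} I must still compare $\rho^e$ with $\rho^{2e-v}p^{v-e}$. Since $v-e\le 0$ and $p\le\rho$, we have $\rho^{v-e}\le p^{v-e}$, so $\rho^e=\rho^{2e-v}\rho^{v-e}$ is at most the target and the comparison goes the wrong way. So the above computation is too crude to give the stated inequality directly. I would redo the optimization by interpolating two choices: the Perron-biased restriction and the trivial bound $t(H,W^{\otimes k})\ge p^{ke}$ applied to $W^{\otimes k}$ itself. More naturally, I would use a one-parameter family of tilts $f^{\theta}$, $\theta\in[0,1]$, in which $\theta=0$ recovers the uniform measure (density $p$) and $\theta=1$ recovers the Perron measure. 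Optimizing the exponent $(v-2e)\log\mu(S)+e\log(\text{edge mass})$ over $\theta$, with the exponents $2e-v$ and $v-e$ arising as the weights, should produce exactly $\rho^{2e-v}p^{v-e}$. I expect this exponent bookkeeping to need care.

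\textbf{Main obstacle.} The step I expect to be hardest is the concentration step when $f$ is only in $L^2$, because $\int f^2|\log f|$ may be infinite or $f$ may vanish on a set of positive measure. I would first replace $f$ by a truncated approximate eigenfunction $g$, cut off above and below on its support, with $\langle T_Wg,g\rangle\ge(1-\delta)\rho$. I would then run the argument with $g$ and the probability measure $W g\otimes g/\langle T_Wg,g\rangle$, whose marginals are only approximately $g^2$. This costs a factor $e^{-O(\delta k)}$, which disappears after taking $k$-th roots and letting $\delta\to0$.
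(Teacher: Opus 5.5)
The gap is in the last step. As written, your argument proves only $t(H,W)\ge\rho^{e}$. That is strictly weaker than (ii) whenever $v<e$ and $p<\rho$, and the proposed repair by a family of tilts $f^\theta$ is only a sketch, not an argument. It is also aimed at the wrong place. For any set $S$, your chain gives
$t(H,W)^k\ge\mu(S)^{v-e}\bigl(\mu(S)^{-1}\int_{S\times S}W^{\otimes k}\bigr)^{e}$,
and the bracket is at most $\rho^k$ by the Rayleigh bound. So once you discard $\mu(S)^{v-e}$ using $\mu(S)\le1$, no choice of set or tilt can beat $\rho^{e}$. Interpolating with the trivial bound $p^{e}$ cannot help either, since the target $\rho^{2e-v}p^{v-e}$ is at least $\max(\rho^{e},p^{e})$. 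What must change is the accounting, not the set.

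Your own estimates already show that $\mu(S_k)$ is exponentially small when $p<\rho$, because the edge mass inside $S_k$ is at most the total edge mass:
\[
        \mu(S_k)\,\rho^k e^{-O(\varepsilon k)}
        \le
        \int_{S_k\times S_k}W^{\otimes k}
        \le
        \int W^{\otimes k}
        =
        p^k .
\]
Since $v-e\le0$, this gives $\mu(S_k)^{v-e}\ge(\rho/p)^{k(e-v)}e^{-O(\varepsilon k)}$. Substituting into your inequality $t(H,W)^k\ge\mu(S_k)^{v-e}\rho^{ke}e^{-O(\varepsilon k)}$ yields
$t(H,W)^k\ge\rho^{k(2e-v)}p^{k(v-e)}e^{-O(\varepsilon k)}$.
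Take $k$-th roots, let $k\to\infty$, then let $\varepsilon\to0$.

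This is exactly the paper's bookkeeping. It writes $\alpha_k^{v}(B_k/\alpha_k^2)^{e}=(B_k/\alpha_k)^{2e-v}B_k^{v-e}$, bounds the first factor using the Perron-biased set, and bounds the second using $B_k\le p^k$ together with $v-e\le0$.

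With that line added, the rest of your argument is sound. Your law-of-large-numbers window around $k\int f^2\log f\,d\mu$ is a simpler substitute for the paper's truncation to $\{\phi\ge a\}$ with randomly shifted bins.

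The obstacle in your last paragraph does not arise. Since $\rho f=T_Wf\le\|f\|_1\le1$, $f$ is bounded, so $f^2|\log f|$ is bounded and $\log f\in L^1(f^2\,d\mu)$. The set $\{f=0\}$ is null for the tilted measure. No truncated approximate eigenfunction is needed.
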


In the present admissible-class setting, the input for the
converse direction is a weak spectral regularization theorem.  It says
that tensor powers contain principal restrictions whose internal average
degree detects \(\rho(W)^k\) on the exponential scale.  Applying the
\(\C\)-Sidorenko inequality to these restrictions and using the sign
\(v(H)-e(H)\le0\) gives the spectral lower bound.

Li, Lin, Liu and Zhang \cite{LiLinLiuZhang2026} recently developed a
finite-graph spectral Sidorenko theory and proved the finite-graph
analogue of this equivalence. Their work was an important motivation
for the spectral part of the present paper. If \(G\) is a finite graph, \(M(G)=2|E(G)|\), and
\(\lambda(G)\) is the spectral radius of its adjacency matrix, their
spectral strengthening has the form
\begin{equation}\label{eq:finite-spectral-intro}
        \hom(H,G)
        \ge
        \lambda(G)^{2e(H)-v(H)}M(G)^{v(H)-e(H)}.
\end{equation}
For the step graphon \(W_G\) associated with an \(n\)-vertex graph
\(G\),
\[
        t(H,W_G)=\frac{\hom(H,G)}{n^{v(H)}},
        \qquad
        p(W_G)=\frac{M(G)}{n^2},
        \qquad
        \rho(W_G)=\frac{\lambda(G)}{n}.
\]
Thus the graphon inequality in \Cref{thm:main} is exactly the
homomorphism-density normalization of
\eqref{eq:finite-spectral-intro}.  The contribution of the present
paper is to extend this equivalence from finite host graphs to
admissible graphon classes through a principal-restriction formulation.
The proof works directly in graphon classes, without relying on finite-step
approximation.  This is what allows the
argument to apply to positivity-preserving classes such as the doubly
nonnegative class.

Degree-biased amplification gives equality regularization in the full
non-matching range \(2e(H)>v(H)\).  Perron-biased amplification gives a
stronger spectral lower bound, but only in the range \(v(H)\le e(H)\).
This restriction is essential, as shown by the example of \(P_3\) in \cite{LiLinLiuZhang2026}.

One motivation for working with admissible classes is the class
\(\DNN\) of doubly nonnegative graphons.  A graphon is doubly
nonnegative when it is positive semidefinite as a kernel.  More
generally, one may consider bounded non-negative positive-semidefinite
kernels.  Sidorenko called graphs satisfying the Sidorenko inequality
for all such kernels good; following recent terminology, we call them
Sidorenko-good \cite{Sidorenko2021DNN,Zhao2026LpKNRS}.  Since
positive semidefiniteness is preserved by tensor powers and principal
restrictions, the general admissible-class results apply directly to
\(\DNN\).  We obtain spectral equivalences for Sidorenko-good graphs
in the range \(v(F)\le e(F)\).  We also show that, for non-matching
Sidorenko-good graphs, equality for a bounded doubly nonnegative kernel
forces regularity, and Sidorenko-good forcing is equivalent to
regular-KNRS forcing.  This connects the positivity framework with the
regular locally dense setting appearing in the KNRS literature
\cite{BradacSudakovWigderson2024,Zhao2026LpKNRS}.

We also record quantitative near-equality stability consequences.  In the
non-matching range, degree-biased amplification gives entropy control
of the degree-biased measure relative to the underlying measure.  In
the spectral range, the spectral transfer theorem gives stronger
\(L^2\)-control of the degree function.  These estimates quantify the
same dichotomy as the exact results: the method removes degree
irregularity, while any remaining obstruction to forcing must already
exist inside the regular world.

The rest of the paper is organized as follows.  \Cref{sec:prelim}
contains the admissible-class definitions and the elementary operator
and measure facts used later.  \Cref{sec:degree} proves
\Cref{thm:equality-regular,cor:relative-forcing-regular}.
\Cref{sec:weak-spectral} proves the weak spectral regularization
theorem.  \Cref{sec:spectral-transfer} proves \Cref{thm:main}.
\Cref{sec:stability} proves the quantitative near-equality statements.
Finally, \Cref{sec:dnn} applies the framework to doubly nonnegative
graphons and bounded doubly nonnegative kernels, giving spectral
equivalences for Sidorenko-good graphs and the equivalence between
Sidorenko-good forcing and regular-KNRS forcing.

\section{Preliminaries}\label{sec:prelim}

All probability spaces are standard and atomless unless explicitly stated otherwise.  Up to measure-preserving isomorphism, such spaces may be represented by \([0,1]\) with Lebesgue measure.  A \emph{graphon} means a symmetric measurable function \(W:\Omega^2\to[0,1]\).  In the final section we also use bounded non-negative kernels in the doubly nonnegative setting; these are reduced to graphons by scaling.

All graphs in the paper are finite and have at least one edge.  Isolated vertices are deleted before any statement is applied.  Thus \(v(H)\) always denotes the number of non-isolated vertices, while \(e(H)\) denotes the number of edges.  With this convention,
\begin{equation}\label{eq:matching-convention}
        2e(H)\ge v(H),
\end{equation}
and equality holds if and only if \(H\) is a matching.

For a graphon \(W\), the integral operator associated with \(W\) is
\[
        T_W f(x)=\int_\Omega W(x,y)f(y)\dd\mu(y).
\]
Since \(W\in L^2(\Omega^2)\), \(T_W\) is compact and self-adjoint on \(L^2(\Omega)\).  We write
\[
        p(W)=t(K_2,W)=\int_{\Omega^2}W(x,y)\dd\mu(x)\dd\mu(y),
        \qquad
        \rho(W)=\|T_W\|_{2\to2}.
\]
For non-negative graphons, \(\rho(W)\) is the Perron spectral radius.  In particular,
\begin{equation}\label{eq:p-le-rho}
        p(W)=\langle \1,T_W\1\rangle \le \rho(W).
\end{equation}
The degree function of \(W\) is
\[
        \deg_W(x)=\int_\Omega W(x,y)\dd\mu(y).
\]
We say that \(W\) is \(p\)-regular if \(\deg_W=p\) almost everywhere, where \(p=p(W)\).

For a finite graph \(H\), the homomorphism density of \(H\) in \(W\) is
\[
        t(H,W)
        =
        \int_{\Omega^{V(H)}}
        \prod_{uv\in E(H)}W(x_u,x_v)
        \prod_{u\in V(H)}\dd\mu(x_u).
\]
For \(k\ge1\), the \(k\)-fold tensor power of \(W\) is the graphon on \((\Omega^k,\mu^k)\) defined by
\[
        W^{\otimes k}(x,y)=\prod_{r=1}^k W(x_r,y_r),
        \qquad
        x=(x_1,\dots,x_k),\quad y=(y_1,\dots,y_k).
\]
Then
\begin{equation}\label{eq:tensor-mult}
        t(H,W^{\otimes k})=t(H,W)^k,
        \qquad
        p(W^{\otimes k})=p(W)^k,
        \qquad
        \rho(W^{\otimes k})=\rho(W)^k.
\end{equation}
The first two identities follow by Fubini, and the third follows from the Hilbert-space tensor-product identity for operator norms.

If \(S\subseteq\Omega\) has positive measure \(\alpha=\mu(S)>0\), the \emph{normalized principal restriction} of \(W\) to \(S\) is the graphon \(W[S]\) on \((S,\mu|_S/\alpha)\) defined by
\[
        W[S](x,y)=W(x,y),\qquad x,y\in S.
\]
Thus
\begin{equation}\label{eq:p-restriction}
        p(W[S])
        =
        \alpha^{-2}
        \int_{S\times S}W(x,y)\dd\mu(x)\dd\mu(y).
\end{equation}

\begin{definition}[Admissible graphon class]\label{def:admissible}
A class \(\C\) of graphons is called \emph{admissible} if it satisfies the following two closure properties.
\begin{enumerate}[label=(A\arabic*)]
    \item If \(W\in\C\), then \(W^{\otimes k}\in\C\) for every \(k\ge1\).
    \item If \(W\in\C\) and \(S\) is a positive-measure measurable subset of the underlying probability space of \(W\), then \(W[S]\in\C\).
\end{enumerate}
Graphons are considered up to measure-preserving isomorphism of their underlying standard atomless probability spaces.
\end{definition}

\begin{example}
The class of all graphons is admissible.  The class of doubly nonnegative graphons is admissible; this is proved in \Cref{lem:dnn-admissible}.
\end{example}

\begin{definition}[\(\C\)-Sidorenko and spectral \(\C\)-Sidorenko]\label{def:Csid}
Let \(\C\) be admissible and let \(H\) be a finite graph.  We say that \(H\) is \emph{\(\C\)-Sidorenko} if
\[
        t(H,W)\ge p(W)^{e(H)}
        \qquad\text{for every }W\in\C.
\]
When \(p(W)>0\), the normalized \emph{Sidorenko ratio} is
\[
        R_H(W)=\frac{t(H,W)}{p(W)^{e(H)}}.
\]
If \(v(H)\le e(H)\), we say that \(H\) is \emph{spectrally \(\C\)-Sidorenko} if
\[
        t(H,W)
        \ge
        \rho(W)^{2e(H)-v(H)}p(W)^{v(H)-e(H)}
        \qquad\text{for every non-zero }W\in\C.
\]
\end{definition}

\begin{definition}\label{def:relative-forcing}
Let \(\C\) be admissible and let \(H\) be \(\C\)-Sidorenko.  We say that \(H\) is \emph{\(\C\)-forcing} if, whenever \(W\in\C\) satisfies
\[
        t(H,W)=p(W)^{e(H)},
\]
then \(W=p(W)\1\) almost everywhere.  We say that \(H\) is \emph{\(\C\)-regular-forcing} if the same implication is required only for regular graphons \(W\in\C\).
\end{definition}

\subsection{Operator facts}\label{subsec:operator-facts}

\begin{lemma}[Perron--Rayleigh facts]\label{lem:perron-eigenfunction}
Let \(W\) be a non-zero graphon.  Then there exists a non-negative function \(\phi\in L^2(\Omega)\) such that
\[
        \|\phi\|_2=1,
        \qquad
        T_W\phi=\rho(W)\phi.
\]
Moreover, \(\phi\in L^\infty(\Omega)\), and
\begin{equation}\label{eq:rayleigh-bound}
        |\langle f,T_Wf\rangle|
        \le \rho(W)\|f\|_2^2
        \qquad\text{for every }f\in L^2(\Omega).
\end{equation}
\end{lemma}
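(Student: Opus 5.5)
The plan is to rely on the spectral theory of compact self-adjoint operators together with the positivity of the kernel. Since \(W\in L^2(\Omega^2)\), the operator \(T_W\) is Hilbert--Schmidt, hence compact and self-adjoint. The Rayleigh bound \eqref{eq:rayleigh-bound} is then immediate from Cauchy--Schwarz and the definition of the operator norm:
\[
        |\langle f,T_Wf\rangle|\le\|f\|_2\|T_Wf\|_2\le\rho(W)\|f\|_2^2 .
\]
For a compact self-adjoint operator, \(\rho(W)=\|T_W\|=\sup_{\|f\|_2=1}|\langle f,T_Wf\rangle|\). Because \(W\) is non-zero and non-negative, \(\langle\1,T_W\1\rangle=p(W)>0\), so \(T_W\neq0\) and \(\rho(W)>0\). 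By the spectral theorem, either \(\rho(W)\) or \(-\rho(W)\) is an eigenvalue, and the supremum of \(|\langle f,T_Wf\rangle|\) over unit vectors is attained.

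The main step is to produce a non-negative maximizer, and positivity is what does it. Since \(W\ge0\), every unit vector \(f\) satisfies
\[
        |\langle f,T_Wf\rangle|\le\langle |f|,T_W|f|\rangle .
\]
Take a unit vector \(f\) attaining \(|\langle f,T_Wf\rangle|=\rho(W)\). Then \(\psi=|f|\) is a non-negative unit vector with \(\langle\psi,T_W\psi\rangle\ge\rho(W)\), and by the Rayleigh bound this is an equality. Hence \(\psi\) maximizes the quadratic form \(\langle g,(\rho(W)-T_W)g\rangle\ge0\) at the value \(0\). Because \(\rho(W)I-T_W\) is a positive semidefinite self-adjoint operator, \(\langle\psi,(\rho I-T_W)\psi\rangle=0\) forces \((\rho I-T_W)^{1/2}\psi=0\), and therefore \(T_W\psi=\rho(W)\psi\). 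Setting \(\phi=\psi\) gives the required eigenfunction.

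The step I expect to need the most care is confirming that the supremum is attained, but this is standard. Compactness gives a largest eigenvalue in absolute value equal to the norm, with an eigenvector; that eigenvector serves as \(f\) above.

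Boundedness then follows from the eigen-equation. For almost every \(x\),
\[
        \phi(x)=\rho(W)^{-1}\int W(x,y)\phi(y)\dd\mu(y)\le\rho(W)^{-1}\|W(x,\cdot)\|_2\|\phi\|_2\le\rho(W)^{-1},
\]
using \(0\le W\le1\). Therefore \(\|\phi\|_\infty\le\rho(W)^{-1}<\infty\), which completes the lemma.
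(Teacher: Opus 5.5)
Your proposal is correct and follows essentially the same route as the paper: compactness gives an eigenvector at $\pm\rho(W)$, the pointwise bound $|\langle f,T_Wf\rangle|\le\langle |f|,T_W|f|\rangle$ shows that $|f|$ attains the top Rayleigh quotient, and the eigen-equation gives the $L^\infty$ bound. The only difference is that you prove the Rayleigh--Ritz step explicitly via $(\rho(W)I-T_W)^{1/2}\psi=0$, whereas the paper first shows $\lambda_+=\rho(W)$ and then cites Rayleigh--Ritz; note also that $\psi$ \emph{minimizes}, not maximizes, the non-negative form $\langle g,(\rho(W)I-T_W)g\rangle$.
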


\begin{proof}
By the compact self-adjoint spectral theorem, the non-zero spectrum of
\(T_W\) consists of real eigenvalues of finite multiplicity, with \(0\)
as the only possible accumulation point.  Since \(W\ge0\) and \(W\ne0\),
we have \(p(W)>0\), hence \(T_W\ne0\).

Put
\[
        \lambda_+=\sup_{\|f\|_2=1}\langle f,T_Wf\rangle .
\]
For every \(f\in L^2(\Omega)\),
\[
        \langle |f|,T_W|f|\rangle
        =
        \int W(x,y)|f(x)||f(y)|\dd\mu(x)\dd\mu(y)
        \ge
        |\langle f,T_Wf\rangle|.
\]
Since \(T_W\) is self-adjoint,
\[
        \|T_W\|_{2\to2}
        =
        \sup_{\|f\|_2=1}|\langle f,T_Wf\rangle|.
\]
The preceding inequality therefore implies
\(\rho(W)=\|T_W\|_{2\to2}=\lambda_+\).  By the compact
self-adjoint spectral theorem, \(\lambda_+=\rho(W)>0\) is an eigenvalue.
Let \(\psi\) be a corresponding unit eigenfunction.  The same inequality
shows that
\[
        \langle |\psi|,T_W|\psi|\rangle
        \ge |\langle \psi,T_W\psi\rangle|=\rho(W).
\]
Since \(\||\psi|\|_2=1\), equality must hold.  By the Rayleigh--Ritz
principle for compact self-adjoint operators, every unit vector attaining
\(\lambda_+\) lies in the eigenspace for \(\lambda_+\).  Hence, after
replacing \(\psi\) by \(|\psi|\), we may take a non-negative unit
eigenfunction \(\phi\) with
\[
        T_W\phi=\rho(W)\phi .
\]  
For almost every \(x\),
\[
        \rho(W)\phi(x)
        =
        \int W(x,y)\phi(y)\dd\mu(y)
        \le
        \|W\|_\infty\|\phi\|_1
        \le
        \|W\|_\infty\|\phi\|_2,
\]
so \(\phi\in L^\infty(\Omega)\).  Finally, \eqref{eq:rayleigh-bound} follows from the definition of the operator norm and self-adjointness.
\end{proof}

\subsection{Elementary measure and probability facts}\label{subsec:basic-facts}

The following elementary facts are included to make the tensor-amplification arguments self-contained.

\begin{fact}[Separating distinct probability measures]\label{fact:measure-separation}
If \(\nu\) and \(\mu\) are probability measures on the same measurable space and \(\nu\ne\mu\), then there is a measurable set \(A\) such that \(\nu(A)>\mu(A)\).
\end{fact}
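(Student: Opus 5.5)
The natural route is the Jordan (Hahn) decomposition of the signed measure \(\nu-\mu\), which gives a positive set directly. Since \(\nu\) and \(\mu\) are both finite measures on the same measurable space, the set function \(\sigma=\nu-\mu\) is a finite signed measure. The Hahn decomposition theorem then gives a measurable set \(P\) such that \(\sigma(B)\ge0\) for every measurable \(B\subseteq P\), and \(\sigma(B)\le0\) for every measurable \(B\subseteq P^c\). We take \(A=P\).

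To see that \(\sigma(P)>0\), first observe that \(\sigma(\text{whole space})=\nu(\text{whole space})-\mu(\text{whole space})=1-1=0\). Since \(\nu\ne\mu\), there is some measurable set \(E\) with \(\sigma(E)\ne0\). Suppose \(\sigma(P)=0\). Then \(\sigma(P^c)=0-\sigma(P)=0\) as well. Every measurable subset \(B\subseteq P\) satisfies \(0\le\sigma(B)\le\sigma(P)=0\), because \(\sigma(P)=\sigma(B)+\sigma(P\setminus B)\) with both terms non-negative. By the same argument, every measurable subset of \(P^c\) has \(\sigma\)-measure \(0\). Splitting \(E=(E\cap P)\cup(E\cap P^c)\) now gives \(\sigma(E)=0\), which is a contradiction. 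Hence \(\sigma(P)>0\), that is, \(\nu(P)>\mu(P)\).

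One can also avoid invoking Hahn decomposition by a complement argument. If \(\sigma(E)<0\), then \(\sigma(E^c)=-\sigma(E)>0\), so one of \(E\) and \(E^c\) already works as \(A\). This shorter argument is the one I would actually write, and it uses only \(\nu(\text{whole space})=\mu(\text{whole space})=1\). There is no real obstacle here. The only point that needs care is that both measures have the same total mass, since this is exactly what forces a positive discrepancy on some set.
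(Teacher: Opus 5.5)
Your proposal is correct, and the complement argument you say you would actually write is exactly the paper's proof: choose $B$ with $\nu(B)\ne\mu(B)$, and take $A=B$ or $A=B^c$, using that both measures have total mass $1$. The Hahn-decomposition route is also valid, since it even yields a set maximizing $\nu-\mu$, but it is unnecessary for this statement.
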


\begin{proof}
Since \(\nu\ne\mu\), there is a measurable set \(B\) such that \(\nu(B)\ne\mu(B)\).  If \(\nu(B)>\mu(B)\), take \(A=B\).  Otherwise take \(A=B^c\).
\end{proof}

\begin{fact}[Law of large numbers in the form used below]\label{fact:lln-separation}
Let \(\nu\) and \(\mu\) be probability measures on the same space.  If a measurable set \(A\) satisfies
\[
        \mu(A)=a<\tau<b=\nu(A),
\]
and
\[
        T_k=
        \left\{x=(x_1,\ldots,x_k):
        \frac1k\bigl|\{i:x_i\in A\}\bigr|\ge \tau\right\},
\]
then
\[
        \mu^k(T_k)\to0,
        \qquad
        \nu^k(T_k)\to1.
\]
\end{fact}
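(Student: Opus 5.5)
The statement to prove is Fact~\ref{fact:lln-separation}. I will prove it with the weak law of large numbers for i.i.d.\ Bernoulli indicators, applied once under each product measure.

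\emph{Setting up the indicators.} For \(x=(x_1,\dots,x_k)\), put
\[
        N_k(x)=\bigl|\{i:x_i\in A\}\bigr|=\sum_{i=1}^k \1_A(x_i).
\]
Under \(\mu^k\) the coordinates \(x_i\) are independent with law \(\mu\). Hence the indicators \(\1_A(x_i)\) are i.i.d.\ Bernoulli with mean \(a\) and variance \(a(1-a)\le 1/4\). Likewise, under \(\nu^k\) they are i.i.d.\ Bernoulli with mean \(b\) and variance at most \(1/4\). Consequently \(N_k/k\) has mean \(a\) (respectively \(b\)) and variance at most \(1/(4k)\).

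\emph{The bound under \(\mu^k\).} Set \(\delta_1=\tau-a>0\). The event \(T_k=\{N_k/k\ge\tau\}\) is contained in \(\{|N_k/k-a|\ge\delta_1\}\). Chebyshev's inequality then gives
\[
        \mu^k(T_k)\le \frac{1}{4k\delta_1^2}\to0 .
\]

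\emph{The bound under \(\nu^k\).} Set \(\delta_2=b-\tau>0\). The complement \(T_k^c=\{N_k/k<\tau\}\) is contained in \(\{|N_k/k-b|>\delta_2\}\). By Chebyshev again,
\[
        \nu^k(T_k^c)\le \frac{1}{4k\delta_2^2}\to0,
\]
so \(\nu^k(T_k)\to1\).

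There is no real obstacle here. The only points to check are that \(T_k\) is measurable, which holds because \(N_k\) is a finite sum of measurable indicators, and that the strict inequalities \(a<\tau<b\) make both gaps \(\delta_1,\delta_2\) positive. If one wants exponential rates for later amplification arguments, Hoeffding's inequality gives \(\mu^k(T_k)\le e^{-2k\delta_1^2}\) and \(\nu^k(T_k^c)\le e^{-2k\delta_2^2}\) in the same way. For the statement as given, Chebyshev suffices.
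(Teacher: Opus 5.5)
Your proof is correct and follows the same approach as the paper, which treats the indicators \(\1_A(x_i)\) as i.i.d.\ Bernoulli variables with means \(a<\tau\) and \(b>\tau\) under \(\mu^k\) and \(\nu^k\) and invokes the weak law of large numbers. Your Chebyshev bounds \(1/(4k\delta_1^2)\) and \(1/(4k\delta_2^2)\) just make that appeal to the weak law explicit.
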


\begin{proof}
Under \(\mu^k\), the indicators \(\1_A(x_i)\) are independent Bernoulli random variables with mean \(a<\tau\).  Under \(\nu^k\), they are independent Bernoulli random variables with mean \(b>\tau\).  The two conclusions follow from the weak law of large numbers.
\end{proof}

\begin{fact}[Marginals and absolute continuity]\label{fact:marginal-ac}
Let \(q\) be a probability measure on \(X\times X\) whose first marginal is \(\nu\).  Then for every measurable \(S\subseteq X\),
\[
        q(S\times S)\le q(S\times X)=\nu(S).
\]
In particular, if \(\nu(S)=0\), then \(q(S\times S)=0\).  Also, if \(\dd\nu=f\dd\mu\) and \(f\le C\) almost everywhere, then
\[
        \nu(S)\le C\mu(S).
\]
Thus \(\nu(S)>0\) implies \(\mu(S)>0\).
\end{fact}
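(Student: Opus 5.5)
The statement is an elementary fact about a probability measure \(q\) on \(X\times X\), and I would prove it by monotonicity of measure followed by a density bound.

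\textbf{Step 1 (the marginal bound).} Since \(S\subseteq X\), we have the inclusion \(S\times S\subseteq S\times X\). Monotonicity of \(q\) therefore gives \(q(S\times S)\le q(S\times X)\). By the definition of the first marginal, \(q(S\times X)=\nu(S)\). Together these give the displayed inequality, and the case \(\nu(S)=0\) follows at once because \(q\) is non-negative.

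\textbf{Step 2 (the density bound).} Assume \(\dd\nu=f\dd\mu\) with \(f\le C\) \(\mu\)-almost everywhere. Then
\[
        \nu(S)=\int_S f\dd\mu\le\int_S C\dd\mu=C\mu(S).
\]
The integral inequality holds because the set where \(f>C\) is \(\mu\)-null and so does not affect the integral. The only implicit assumption is that \(f\ge0\), which is automatic since \(\nu\) is a measure; it is not actually needed for the upper bound.

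\textbf{Step 3 (the final implication).} This is the contrapositive of Step 2. If \(\mu(S)=0\), then \(\nu(S)\le C\cdot0=0\). Hence \(\nu(S)>0\) forces \(\mu(S)>0\).

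There is no real obstacle here. The only point needing care is that the almost-everywhere bound on \(f\) is used under \(\mu\), which is exactly the measure against which the density is integrated.
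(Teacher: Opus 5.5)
Your proof is correct and follows the paper's argument exactly: monotonicity of \(q\) under the inclusion \(S\times S\subseteq S\times X\) together with the definition of the marginal, then the integral bound \(\nu(S)=\int_S f\dd\mu\le C\mu(S)\). The final implication is, as you note, just its contrapositive.
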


\begin{proof}
The first inequality is monotonicity of measures, since \(S\times S\subseteq S\times X\).  The absolute-continuity bound follows from \(\nu(S)=\int_S f\dd\mu\le C\mu(S)\).
\end{proof}

\begin{fact}[Chebyshev inequality]\label{fact:chebyshev}
If \(Z_1,\ldots,Z_k\) are independent mean-zero random variables with \(|Z_i|\le R\), then for every \(t>0\),
\[
        \bbP\left(\left|\sum_{i=1}^k Z_i\right|>t\right)
        \le
        \frac{kR^2}{t^2}.
\]
\end{fact}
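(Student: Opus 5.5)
This is the standard Chebyshev inequality, obtained from Markov's inequality applied to the square of the sum. Put \(S_k=\sum_{i=1}^k Z_i\). Since the \(Z_i\) are bounded they are square-integrable, so \(S_k\in L^2\), and the event \(\{|S_k|>t\}\) equals \(\{S_k^2>t^2\}\). Markov's inequality for the non-negative random variable \(S_k^2\) then gives
\[
        \bbP\bigl(|S_k|>t\bigr)\le \frac{\bbE[S_k^2]}{t^2}.
\]

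Next I compute the second moment by expanding the square:
\[
        \bbE[S_k^2]=\sum_{i=1}^k\bbE[Z_i^2]+\sum_{i\ne j}\bbE[Z_iZ_j].
\]
For \(i\ne j\), independence gives \(\bbE[Z_iZ_j]=\bbE[Z_i]\,\bbE[Z_j]\), and this vanishes because each \(Z_i\) has mean zero. Thus \(\bbE[S_k^2]=\sum_i\bbE[Z_i^2]\). The bound \(|Z_i|\le R\) gives \(\bbE[Z_i^2]\le R^2\), so \(\bbE[S_k^2]\le kR^2\). Substituting this into the Markov bound yields the claim.

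There is no real obstacle here. The only point to note is that pairwise independence, which is implied by independence, is all that is needed to kill the cross terms. Boundedness guarantees that every expectation above is finite.
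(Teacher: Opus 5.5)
Your proof is correct and is essentially the paper's argument: the paper bounds \(\Var\bigl(\sum_i Z_i\bigr)=\sum_i \Var(Z_i)\le kR^2\) and applies Chebyshev's inequality. You simply unpack Chebyshev into Markov's inequality for \(S_k^2\) and variance additivity into the vanishing of the cross terms \(\bbE[Z_iZ_j]\).
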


\begin{proof}
The variance of \(\sum_i Z_i\) is \(\sum_i \Var(Z_i)\le kR^2\).  The result follows from Chebyshev's inequality.
\end{proof}

\begin{fact}[Randomly shifted intervals]\label{fact:shifted-bins}
Let \(w>0\), and for \(\tau\in[0,w)\) define
\[
        I_j^\tau=[\tau+jw,\tau+(j+1)w),
        \qquad j\in\mathbb Z.
\]
For fixed \(s,t\in\bbR\), the probability over uniformly random \(\tau\in[0,w)\) that \(s\) and \(t\) lie in the same interval is
\[
        \left(1-\frac{|s-t|}{w}\right)_+.
\]
In particular, this probability is at least \(1/2\) whenever \(|s-t|\le w/2\).
\end{fact}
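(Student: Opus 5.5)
Since \(\tau\) is uniform on \([0,w)\), the whole computation reduces to the one-dimensional geometry of the shifted grid \(\{\tau+jw:j\in\mathbb Z\}\).

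\textbf{Reformulation.} The points \(s\) and \(t\) lie in different intervals exactly when some grid point \(\tau+jw\) lies in the half-open interval between them. Assume without loss of generality that \(s\le t\), and put \(d=t-s\). Then \(s\) and \(t\) are separated if and only if some grid point lies in \((s,t]\), with the endpoint convention matching the half-open intervals \(I_j^\tau\).

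\textbf{Case \(d\ge w\).} The interval \((s,t]\) has length at least \(w\), so it always contains a grid point. The probability of lying in the same interval is therefore \(0=(1-d/w)_+\).

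\textbf{Case \(d<w\).} The interval \((s,t]\) contains at most one grid point. As \(\tau\) ranges uniformly over \([0,w)\), the grid residue \(\tau \bmod w\) is uniform on the circle \(\bbR/w\mathbb Z\). The projection of \((s,t]\) to this circle is an arc of length \(d\). Hence the probability of separation is \(d/w\), and the probability of lying in the same interval is \(1-d/w\).

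\textbf{Bound.} If \(|s-t|\le w/2\), then \(1-|s-t|/w\ge 1/2\), which gives the final claim.

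\textbf{Main obstacle.} Only the bookkeeping is delicate: one must check that the map \(\tau\mapsto\tau\bmod w\) pushes uniform measure forward to uniform measure on the circle, and that the measure-zero boundary cases do not matter. Both are immediate.
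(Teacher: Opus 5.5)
Your proposal is correct and follows essentially the same route as the paper: reduce to \(s\le t\), note that separation happens exactly when a boundary point \(\tau+jw\) lands in \((s,t]\), and use that \(\tau\) is uniform modulo \(w\) to get separation probability \(\min\{d/w,1\}\). You are slightly more explicit than the paper about the half-open endpoint convention and you also cover the case \(s=t\), but the argument is the same.
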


\begin{proof}
By translation invariance modulo \(w\), assume \(0\le s<t\) and put \(\Delta=t-s\).  If \(\Delta\ge w\), the two points cannot lie in the same interval.  If \(0\le\Delta<w\), they are separated exactly when the random boundary point falls in an interval of length \(\Delta\) modulo \(w\).  Hence the probability of being in the same interval is \(1-\Delta/w\).
\end{proof}

\begin{fact}[Finite-partition type bound]\label{fact:type-bound}
Let \(\mathcal P=\{P_1,\dots,P_m\}\) be a finite measurable partition of a probability space \((\Omega,\mu)\).  Put \(a_i=\mu(P_i)\).  Let \(b=(b_1,\dots,b_m)\) be a probability vector with \(b_i=0\) whenever \(a_i=0\), and set
\[
        D(b\mid a)=\sum_{i:a_i>0} b_i\log\frac{b_i}{a_i}.
\]
For \(\delta>0\), define
\[
        T_{k,\delta}=
        \left\{x\in\Omega^k:
        \left|\frac1k|\{r:x_r\in P_i\}|-b_i\right|\le\delta
        \text{ for every }i
        \right\}.
\]
Then there is a function \(\omega_{a,b}(\delta)\to0\) as \(\delta\downarrow0\) such that
\[
        \mu^k(T_{k,\delta})
        \le
        (k+1)^m
        \exp\left(-k\bigl(D(b\mid a)-\omega_{a,b}(\delta)\bigr)\right).
\]
\end{fact}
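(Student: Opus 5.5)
The plan is to reduce to a counting statement about types and then apply the standard method-of-types estimate. Write $N_i(x)=|\{r:x_r\in P_i\}|$. Then $T_{k,\delta}$ is the disjoint union, over all integer vectors $n=(n_1,\dots,n_m)$ with $\sum_i n_i=k$ and $|n_i/k-b_i|\le\delta$ for every $i$, of the sets $\{x:N(x)=n\}$. The number of such vectors is at most $(k+1)^m$, since each $n_i$ lies in $\{0,\dots,k\}$. This accounts for the polynomial prefactor. It therefore suffices to bound each summand by
\[
\exp\bigl(-k(D(b\mid a)-\omega_{a,b}(\delta))\bigr),
\]
uniformly in $n$.

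For a fixed type $n$, I would use the multinomial formula $\mu^k(N=n)=\binom{k}{n}\prod_i a_i^{n_i}$. If some $a_i=0$ with $n_i>0$, this is zero, so we may assume $n_i=0$ whenever $a_i=0$. Writing $q_i=n_i/k$, the standard bound $\binom{k}{n}\le\exp(kH(q))$ gives
\[
\mu^k(N=n)\le\exp\bigl(-kD(q\mid a)\bigr),
\]
where $D(q\mid a)=\sum_{a_i>0}q_i\log(q_i/a_i)$. The bound on the multinomial coefficient follows from $1\ge\binom{k}{n}\prod_i q_i^{n_i}$, which is the multinomial expansion of $(\sum_i q_i)^k=1$.

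It remains to compare $D(q\mid a)$ with $D(b\mid a)$ when $|q_i-b_i|\le\delta$. Two sets of indices need care.
\begin{itemize}
\item Indices with $a_i=0$ have $b_i=0$ by hypothesis and $q_i=0$ by the reduction above, so they contribute nothing to either divergence.
\item Indices with $a_i>0$ contribute through the function $t\mapsto t\log(t/a_i)$, which is continuous on $[0,1]$ (with $0\log0=0$), hence uniformly continuous.
\end{itemize}
Define
\[
\omega_{a,b}(\delta)=\sup\Bigl\{D(b\mid a)-D(q\mid a):\ q\text{ a probability vector},\ |q_i-b_i|\le\delta\ \forall i,\ q_i=0\text{ when }a_i=0\Bigr\}.
\]
Setting $q=b$ shows $\omega_{a,b}(\delta)\ge 0$, and uniform continuity on a compact set with finitely many coordinates gives $\omega_{a,b}(\delta)\to0$ as $\delta\downarrow0$. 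Then $D(q\mid a)\ge D(b\mid a)-\omega_{a,b}(\delta)$ for every admissible type, and summing the per-type bound over at most $(k+1)^m$ types yields the claim.

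The main obstacle is the continuity step: $D$ need not be Lipschitz near $q_i=0$ because of the $t\log t$ term. Using uniform continuity, rather than seeking an explicit modulus, avoids this issue. Excluding the $a_i=0$ coordinates by the zero-probability reduction handles the only genuine singularity.
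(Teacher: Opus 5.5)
Your proposal is correct and follows the paper's proof: you split $T_{k,\delta}$ into at most $(k+1)^m$ type classes, bound each class by $\exp\bigl(-kD(q\mid a)\bigr)$, with types that charge a null cell having zero mass, and then use continuity of relative entropy on the face $\{q_i=0 \text{ whenever } a_i=0\}$. You also supply details the paper leaves implicit, namely the multinomial-coefficient bound $\binom{k}{n}\le \exp\bigl(kH(q)\bigr)$ and an explicit choice of $\omega_{a,b}(\delta)$ as a supremum.
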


\begin{proof}
Fix an empirical distribution \(r=(r_1,\dots,r_m)\).  The \(\mu^k\)-mass of the type class with empirical distribution \(r\) is at most
\[
        \exp\left(-k\sum_{i:a_i>0} r_i\log\frac{r_i}{a_i}\right),
\]
with the convention that a type using a cell with \(a_i=0\) has zero \(\mu^k\)-mass.  There are at most \((k+1)^m\) possible empirical distributions.  Relative entropy is continuous on the face \(\{r_i=0\text{ whenever }a_i=0\}\), so the displayed estimate follows after taking the infimum over all types within \(\delta\) of \(b\).
\end{proof}

For probability measures \(\nu\ll\mu\), we write
\[
        \Ent(\nu\mid\mu)
        =
        \int \log\left(\frac{\dd\nu}{\dd\mu}\right)\dd\nu
        =
        \int
        \frac{\dd\nu}{\dd\mu}
        \log\left(\frac{\dd\nu}{\dd\mu}\right)\dd\mu
\]
for relative entropy, with natural logarithms.  We also write
\[
        \|\nu-\mu\|_{\mathrm{TV}}
        =
        \sup_A |\nu(A)-\mu(A)|
\]
for total variation distance.  We use the standard Pinsker inequality
\begin{equation}\label{eq:pinsker}
        \|\nu-\mu\|_{\mathrm{TV}}
        \le
        \sqrt{\frac12\Ent(\nu\mid\mu)}.
\end{equation}

\section{Degree-biased tensor amplification}\label{sec:degree}

\begin{lemma}[Degree-biased tensor amplification]\label{lem:degree-amplification}
Let \(W\) be a graphon with \(p=p(W)>0\).  If \(W\) is not \(p\)-regular, then there are measurable sets \(T_k\subseteq\Omega^k\) such that
\[
        \mu^k(T_k)\to0
        \qquad\text{and}\qquad
        \int_{T_k\times T_k}W^{\otimes k}=(1-o(1))p^k.
\]
Moreover, for every fixed \(\alpha\in(0,1)\), there are measurable sets \(S_k\subseteq\Omega^k\) with \(\mu^k(S_k)=\alpha\) such that
\[
        \int_{S_k\times S_k}W^{\otimes k}=(1-o(1))p^k.
\]
\end{lemma}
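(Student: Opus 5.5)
We introduce the degree-biased probability measure \(\nu\) on \(\Omega\) by \(\dd\nu=\deg_W\dd\mu/p\), and the edge measure \(q\) on \(\Omega^2\) by \(\dd q=W(x,y)\dd\mu(x)\dd\mu(y)/p\).  The measure \(q\) has both marginals equal to \(\nu\), by symmetry of \(W\).  Since \(W\) is not \(p\)-regular, \(\nu\ne\mu\).  By \Cref{fact:measure-separation} there is a measurable \(A\) with \(a=\mu(A)<\nu(A)=b\).  We fix \(\tau\in(a,b)\) and define \(T_k\) exactly as in \Cref{fact:lln-separation}.  That fact immediately gives \(\mu^k(T_k)\to0\) and \(\nu^k(T_k)\to1\).

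For the edge mass we use the identity
\[
\int_{T_k\times T_k}W^{\otimes k}=p^k\,q^k(T_k\times T_k),
\]
which holds because \(W^{\otimes k}\,\dd\mu^k\dd\mu^k=p^k\,\dd q^{\otimes k}\).  Here \(q^{\otimes k}\) is a probability measure on \((\Omega^k)^2\), and both of its marginals equal \(\nu^k\).  A union bound then gives
\[
q^{\otimes k}(T_k\times T_k)\ge 1-2\,\nu^k(T_k^c)\to1.
\]
Together with the trivial upper bound \(\int_{T_k\times T_k}W^{\otimes k}\le p^k\), this proves the first assertion.

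For the second assertion we fix \(\alpha\in(0,1)\).  Once \(\mu^k(T_k)<\alpha\), we choose \(S_k\supseteq T_k\) with \(\mu^k(S_k)=\alpha\).  This is possible because \((\Omega^k,\mu^k)\) is atomless, so we may add to \(T_k\) a subset of \(T_k^c\) of the required measure.  Monotonicity then gives
\[
p^k\ge\int_{S_k\times S_k}W^{\otimes k}\ge\int_{T_k\times T_k}W^{\otimes k}=(1-o(1))p^k.
\]
For the finitely many small \(k\) with \(\mu^k(T_k)\ge\alpha\), we may take any set of measure \(\alpha\), since these do not affect an \(o(1)\) statement.

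No step is a real obstacle.  The only points needing care are the change of measure from \(W^{\otimes k}\) to \(q^{\otimes k}\), which is Fubini together with \(\int_{\Omega^2}W=p\), and the observation that the marginals of \(q^{\otimes k}\) are \(\nu^k\).  The latter is what transfers the law-of-large-numbers concentration under \(\nu^k\) to concentration of edge mass inside \(T_k\times T_k\).
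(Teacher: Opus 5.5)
Your proposal is correct and follows essentially the same route as the paper. Both arguments use the degree-biased measure $\nu$, the edge measure $q$ with both marginals equal to $\nu$, the law-of-large-numbers sets $T_k$, the union bound $q^{\otimes k}(T_k\times T_k)\ge 1-2\nu^k(T_k^c)$, and enlargement to measure $\alpha$ via atomlessness. The only cosmetic difference is that for $S_k$ you use monotonicity of the integral, whereas the paper re-runs the marginal estimate using $\nu^k(S_k)\ge\nu^k(T_k)$; both are equally valid.
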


\begin{proof}
Define the degree-biased probability measure
\[
        \dd\nu(x)=\frac{\deg_W(x)}p\dd\mu(x),
\]
and the edge probability measure
\[
        \dd q(x,y)=\frac{W(x,y)}p\dd\mu(x)\dd\mu(y).
\]
Since \(W\) is symmetric, both marginals of \(q\) are \(\nu\).  Since \(W\) is not \(p\)-regular, \(\nu\ne\mu\).  By \Cref{fact:measure-separation}, there is a measurable set \(A\subseteq\Omega\) such that
\[
        b:=\nu(A)>a:=\mu(A).
\]
Because \(\nu\ll\mu\), necessarily \(0<a<1\).  Choose \(\tau\) with \(a<\tau<b\), and set
\[
        T_k=
        \left\{x=(x_1,\dots,x_k):
        \frac1k|\{i:x_i\in A\}|\ge\tau
        \right\}.
\]
By \Cref{fact:lln-separation},
\[
        \mu^k(T_k)\to0,
        \qquad
        \nu^k(T_k)\to1.
\]
Since \(q^{\otimes k}\) has both marginals \(\nu^k\),
\[
        q^{\otimes k}(T_k\times T_k)
        \ge
        1-2\nu^k(T_k^c)
        \to1.
\]
Therefore
\[
        \int_{T_k\times T_k}W^{\otimes k}
        =
        p^k q^{\otimes k}(T_k\times T_k)
        =
        (1-o(1))p^k.
\]
This proves the first assertion.

For the second assertion, fix \(\alpha\in(0,1)\).  Since the underlying spaces are atomless and \(\mu^k(T_k)\to0\), for all sufficiently large \(k\) we may enlarge \(T_k\) to a measurable set \(S_k\supseteq T_k\) with \(\mu^k(S_k)=\alpha\).  Then \(\nu^k(S_k)\ge\nu^k(T_k)\to1\), so the same marginal estimate gives \(q^{\otimes k}(S_k\times S_k)\to1\).  Hence
\[
        \int_{S_k\times S_k}W^{\otimes k}=(1-o(1))p^k.
\]
\end{proof}

\begin{proof}[Proof of \Cref{thm:equality-regular}]
Put \(v=v(H)\), \(e=e(H)\), and \(p=p(W)\).  If \(p=0\), then \(W=0\) almost everywhere, so the conclusion is trivial.  Suppose \(p>0\), and assume for contradiction that \(W\) is not \(p\)-regular.  By \Cref{lem:degree-amplification}, there are sets \(T_k\subseteq\Omega^k\) such that
\[
        \alpha_k:=\mu^k(T_k)\to0,
        \qquad
        B_k:=\int_{T_k\times T_k}W^{\otimes k}=(1-o(1))p^k.
\]
Let \(U_k=(W^{\otimes k})[T_k]\).  By admissibility, \(U_k\in\C\).  Restricting all vertices of \(H\) to \(T_k\) and then applying the \(\C\)-Sidorenko inequality to \(U_k\), we get
\[
\begin{aligned}
        t(H,W)^k
        &=t(H,W^{\otimes k}) \\
        &\ge \alpha_k^v t(H,U_k) \\
        &\ge \alpha_k^v p(U_k)^e
        =\alpha_k^v\left(\frac{B_k}{\alpha_k^2}\right)^e
        =\alpha_k^{v-2e}B_k^e.
\end{aligned}
\]
Using the assumed equality gives
\[
        p^{ke}
        \ge
        \alpha_k^{v-2e}(1-o(1))p^{ke}.
\]
Since \(v-2e<0\) and \(\alpha_k\to0\), the right-hand side is eventually larger than \(p^{ke}\), a contradiction.  Hence \(W\) is \(p\)-regular.
\end{proof}

\begin{remark}[Optimality]\label{rem:equality-optimality}
By the standing convention \eqref{eq:matching-convention}, equality \(2e(H)=v(H)\) holds exactly when \(H\) is a matching.  For a matching \(mK_2\), one has \(t(H,W)=p(W)^m\) for every \(W\).  Thus no equality-case regularization theorem can hold for matchings.
\end{remark}

\begin{proof}[Proof of \Cref{cor:relative-forcing-regular}]
The forward implication is immediate.  Conversely, suppose that \(H\) is \(\C\)-regular-forcing and that \(W\in\C\) satisfies
\[
        t(H,W)=p(W)^{e(H)}.
\]
Since \(H\) is not a matching, \(2e(H)>v(H)\).  By \Cref{thm:equality-regular}, \(W\) is regular.  The \(\C\)-regular-forcing assumption then implies that \(W\) is constant.  The final assertion follows by taking \(\C\) to be the admissible class of all graphons.
\end{proof}

\section{Weak spectral regularization}\label{sec:weak-spectral}

The spectral input is a principal-restriction form of weak regularization.  It says that tensor powers contain principal restrictions whose internal average degree detects the Perron spectral radius on the exponential scale.

\begin{theorem}[Weak spectral regularization]\label{thm:weak-spectral}
Let \(W\) be a non-zero graphon on \((\Omega,\mu)\).  For \(k\ge1\), define
\[
        D_k(W)
        =
        \sup_{\mu^k(S)>0}
        \frac{1}{\mu^k(S)}
        \int_{S\times S}W^{\otimes k}(x,y)\dd\mu^k(x)\dd\mu^k(y),
\]
where the supremum is over all measurable sets \(S\subseteq\Omega^k\) of positive measure.  Then
\[
        \lim_{k\to\infty}D_k(W)^{1/k}=\rho(W).
\]
\end{theorem}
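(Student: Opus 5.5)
The proof splits into an upper bound, which is immediate, and a lower bound, which requires a Perron-biased type-class construction.

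\emph{Upper bound.} For any $S$ of positive measure, apply the Rayleigh bound \eqref{eq:rayleigh-bound} to $W^{\otimes k}$ with $f=\1_S$. Together with \eqref{eq:tensor-mult} this gives
\[
\int_{S\times S}W^{\otimes k}=\langle \1_S,T_{W^{\otimes k}}\1_S\rangle\le\rho(W)^k\mu^k(S).
\]
Hence $D_k(W)\le\rho(W)^k$, and so $\limsup_k D_k(W)^{1/k}\le\rho(W)$.

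\emph{Lower bound.} Let $\phi\ge0$ be the bounded unit Perron eigenfunction from \Cref{lem:perron-eigenfunction}, and set $\rho=\rho(W)$. The plan is to mimic \Cref{lem:degree-amplification}, but with the Perron-biased measures
\[
d\nu=\phi^2\,d\mu,\qquad dq(x,y)=\rho^{-1}\phi(x)W(x,y)\phi(y)\,d\mu(x)\,d\mu(y).
\]
Both marginals of $q$ are $\nu$. The heuristic is that on a typical set for $\nu^k$, the weights $\prod\phi(x_r)$ are essentially constant, equal to $e^{k c}$ with $c=\int\log\phi\,d\nu$. Then
\[
\int_{S\times S}W^{\otimes k}\approx e^{-2kc}\rho^k q^{\otimes k}(S\times S),
\]
while $\mu^k(S)\approx e^{-2kc}\nu^k(S)$. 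The ratio is then about $\rho^k$.

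To make this rigorous, fix $\varepsilon>0$. Since $\phi$ is bounded, we can avoid unbounded logarithms by truncating: restrict to $\{\phi\ge\eta\}$, whose $\nu$-mass tends to $1$ as $\eta\to0$. Partition that region into finitely many level sets $P_i=\{\phi\in[e^{i w},e^{(i+1)w})\}$ of small logarithmic width $w$.

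Let $S_k$ be the set of $x\in\Omega^k$ whose empirical distribution over the cells is within $\delta$ of the $\nu$-distribution $b_i=\nu(P_i)$, where the leftover region is treated as one extra cell. On $S_k$ we have $\prod_r\phi(x_r)=e^{k(c'\pm O(w+\delta\log(1/\eta)))}$ for a fixed $c'$ close to $c$, uniformly in $x$. We then estimate
\[
\int_{S_k\times S_k}W^{\otimes k}\ge \rho^k\Bigl(\min_{S_k}\prod\phi\Bigr)^{-2}q^{\otimes k}(S_k\times S_k),
\qquad
\mu^k(S_k)=\int_{S_k}\prod\phi(x_r)^{-2}\,d\nu^k\le\Bigl(\min_{S_k}\prod\phi\Bigr)^{-2}.
\]
The law of large numbers (as in \Cref{fact:lln-separation}) gives $\nu^k(S_k)\to1$, and the marginal argument from \Cref{lem:degree-amplification} then gives $q^{\otimes k}(S_k\times S_k)\to1$. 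A naive use of these bounds would divide the min-bound by the max-bound. Because all points of $S_k$ carry weights within a factor $e^{k\,O(w+\delta\log(1/\eta))}$ of each other, we get
\[
D_k\ge \rho^k e^{-k\,O(w+\delta\log(1/\eta))}(1-o(1)).
\]
The error exponent is forced to zero by choosing first $\eta$, then $w$ and $\delta$ small.

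\emph{Main obstacle.} The hard step is controlling the weight $\prod\phi(x_r)$ uniformly on the chosen set. The points where $\phi$ is near $0$ make $\log\phi$ unbounded, and these must be handled by the truncation cell. The difficulty is that coordinates landing in that cell, a $\delta$-fraction at most, could carry arbitrarily small $\phi$ and destroy the upper bound on $\mu^k(S_k)$. I would first try to require that the number of coordinates in the bad cell is \emph{exactly} at most $\delta k$, and to bound the $\mu$-mass of that factor crudely by $1$ rather than through $\phi^{-2}$. That is, on bad coordinates I would use $\mu$ directly, and accept a loss of $\nu(\{\phi<\eta\})^{\delta k}$-type factors in the $q$-mass. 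Alternatively, I would pass to $W[\{\phi\ge\eta\}]$, whose spectral radius tends to $\rho$ as $\eta\to0$, so that $\phi$ is bounded below from the start.
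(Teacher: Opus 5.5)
\textbf{Gap: the leftover cell.} The gap is the one you flag yourself, and as written it breaks the main line. Write $\Phi_k(x)=\prod_r\phi(x_r)$. Coordinates in the leftover cell $\{\phi<\eta\}$ can carry arbitrarily small values of $\phi$, and $\phi$ may even vanish on a set of positive $\mu$-measure (e.g.\ when $W$ is disconnected). Then $\inf_{S_k}\Phi_k=0$, so the claimed uniform estimate for $\Phi_k$ on $S_k$ is false. The identity $\mu^k(S_k)=\int_{S_k}\Phi_k^{-2}\,d\nu^k$ also fails, because $\nu^k$ does not see $\{\Phi_k=0\}$. Separately, your edge-mass display has the wrong extremum: it needs $(\sup_{S_k}\Phi_k)^{-2}$, not $(\min_{S_k}\Phi_k)^{-2}$.

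\textbf{Your second remedy, as phrased, does not close the gap.} The Perron eigenfunction of the restricted kernel $W\1_{A\times A}$, with $A=\{\phi\ge\eta\}$, can itself vanish on part of $A$, because restriction can disconnect the kernel. So the truncation problem recurs. If instead you keep the original $\phi$, it is no longer an eigenfunction of the restricted kernel. Then $q|_{A\times A}$ does not in general have marginals $\nu|_A$, and your step from $\nu^k(S_k)\to1$ to $q^{\otimes k}(S_k\times S_k)\to1$ is lost.

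\textbf{The missing idea.} This is exactly the paper's device: no eigen-relation on $A$ is needed.
\begin{itemize}
\item Keep $\phi$ and $\rho$, set $\theta=q(A\times A)$ and $q_A=q|_{A\times A}/\theta$, and use only that $q_A$ is symmetric.
\item Take $S_k\subseteq A^k$ typical for the common marginal of $q_A$, not for $\nu$. Then $q_A^{\otimes k}(S_k\times S_k)\to1$ and $\mu^k(S_k)\le(\inf_{S_k}\Phi_k)^{-2}$.
\item The edge mass satisfies $\int_{S_k\times S_k}W^{\otimes k}\ge(\rho\theta)^k(\sup_{S_k}\Phi_k)^{-2}q_A^{\otimes k}(S_k\times S_k)$.
\item Let $e^{\epsilon k}$ bound $\sup_{S_k}\Phi_k/\inf_{S_k}\Phi_k$; choosing $\eta$, then $w$, then $\delta$ makes $\epsilon$ small. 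This gives $D_k(W)\ge(\rho\theta)^ke^{-2\epsilon k}(1-o(1))$, and $\theta\to1$ as $\eta\downarrow0$.
\end{itemize}

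\textbf{Comparison with the paper.} The paper controls the weights differently. It applies Chebyshev to $\log\Phi_k(\mathbf X)-\log\Phi_k(\mathbf Y)$ under $q_A^{\otimes k}$, uses randomly shifted bins of width $O(k^{2/3})$, and pigeonholes a single bin. The within-bin fluctuation is then only $e^{o(k)}$, so $\theta^k$ is its only exponential loss. Your type-class version avoids the pigeonhole, but pays $e^{-2\epsilon k}$, which the order of limits removes.

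\textbf{Your first remedy can also be made rigorous}, but the cost sits in the $\mu$-mass, not the $q$-mass. Bad coordinates satisfy $\phi<\eta\le1$, which only helps the edge bound. Bounding their $\mu$-factor by $1$ costs the choice of their positions. That is at most $\sum_{j\le(b_0+\delta)k}\binom kj\le e^{k h(b_0+\delta)}$, where $b_0=\nu(\{\phi<\eta\})$ and $h$ is the binary entropy function; this rate tends to $0$ as $\eta,\delta\to0$.
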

The idea of the proof is as follows.  We use a Perron eigenfunction
\(\phi\) to tilt the vertex measure to
\(\dd\pi=\phi^2\dd\mu\) and the edge measure to
\[
        \dd q=\rho(W)^{-1}W(x,y)\phi(x)\phi(y)\dd\mu(x)\dd\mu(y).
\]
The measure \(q\) has both marginals equal to \(\pi\).  After restricting
to a level set where \(\phi\) is bounded above and below, the logarithm
of the tensor weight \(\Phi_k=\prod_i\phi(x_i)\) has sublinear
fluctuation along \(q^{\otimes k}\)-edges.  A randomly shifted
partition of the possible values of \(\log\Phi_k\) then produces a bin
on which \(\Phi_k\) is essentially constant up to \(e^{o(k)}\), while
retaining enough diagonal \(q^{\otimes k}\)-mass.  Converting this
\(q^{\otimes k}\)-mass back to \(W^{\otimes k}\)-mass gives a principal
set \(S_k\) with
\[
        \mu^k(S_k)^{-1}\int_{S_k\times S_k}W^{\otimes k}
        \ge (\rho(W)\theta)^k e^{-o(k)}.
\]
Finally \(\theta\to1\) as the lower cutoff on \(\phi\) tends to zero.
\begin{proof}
We first prove the upper bound.  For every measurable \(S\subseteq\Omega^k\) with \(\mu^k(S)>0\),
\[
        \frac{1}{\mu^k(S)}
        \int_{S\times S}W^{\otimes k}
        =
        \frac{\langle \1_S,T_{W^{\otimes k}}\1_S\rangle}{\|\1_S\|_2^2}
        \le
        \rho(W^{\otimes k})
        =
        \rho(W)^k,
\]
where the inequality uses \eqref{eq:rayleigh-bound}.  Hence
\begin{equation}\label{eq:Dk-upper}
        D_k(W)\le \rho(W)^k.
\end{equation}

It remains to prove the matching lower bound on the exponential scale.  Let \(\phi\ge0\) be the Perron eigenfunction from \Cref{lem:perron-eigenfunction}, normalized by
\[
        \|\phi\|_2=1,
        \qquad
        T_W\phi=\rho\phi,
        \qquad
        \rho=\rho(W).
\]
Define a probability measure \(\pi\) on \(\Omega\) and a probability measure \(q\) on \(\Omega^2\) by
\begin{equation}\label{eq:pi-q-graphon}
        \dd\pi(x)=\phi(x)^2\dd\mu(x),
        \qquad
        \dd q(x,y)=
        \frac{W(x,y)\phi(x)\phi(y)}\rho
        \dd\mu(x)\dd\mu(y).
\end{equation}
Indeed, \(\pi(\Omega)=\|\phi\|_2^2=1\), and
\[
        q(\Omega^2)
        =
        \frac1\rho\langle \phi,T_W\phi\rangle
        =1.
\]
The measure \(q\) is symmetric, and both marginals of \(q\) are \(\pi\): for every bounded measurable function \(f\),
\[
        \int_{\Omega^2} f(x)\dd q(x,y)
        =
        \frac1\rho
        \int_\Omega f(x)\phi(x)T_W\phi(x)\dd\mu(x)
        =
        \int_\Omega f(x)\phi(x)^2\dd\mu(x).
\]
The second marginal is the same by symmetry.

For \(a>0\), set
\[
        A_a=\{x\in\Omega:\phi(x)\ge a\}.
\]
Since \(\dd\pi=\phi^2\dd\mu\), we have \(\pi(\{\phi=0\})=0\).  Therefore
\[
        \pi(\Omega\setminus A_a)\to0
        \qquad\text{as }a\downarrow0.
\]
Since both marginals of \(q\) are \(\pi\),
\[
        1-q(A_a\times A_a)
        \le
        2\pi(\Omega\setminus A_a).
\]
Hence
\begin{equation}\label{eq:theta-to-one}
        q(A_a\times A_a)\to1
        \qquad\text{as }a\downarrow0.
\end{equation}

Fix \(a>0\) such that \(\theta:=q(A_a\times A_a)>0\), and write \(A=A_a\).  Let \(M=\|\phi\|_\infty\).  On \(A\), the function \(\log\phi\) is bounded, and its oscillation is at most
\[
        R=\log(M/a).
\]
For \(k\ge1\), define
\[
        \Phi_k(x_1,\dots,x_k)=\prod_{r=1}^k\phi(x_r),
        \qquad
        \ell_k(x_1,\dots,x_k)=\log\Phi_k(x_1,\dots,x_k),
\]
only on \(A^k\), where \(\Phi_k>0\).

Let
\[
        q_A=\frac{q|_{A\times A}}\theta.
\]
Sample
\[
        (\mathbf X,\mathbf Y)=((X_1,\dots,X_k),(Y_1,\dots,Y_k))
\]
from \(q_A^{\otimes k}\).  For \(1\le r\le k\), put
\[
        Z_r=\log\phi(X_r)-\log\phi(Y_r).
\]
Since \(q_A\) is symmetric, \(\bbE Z_r=0\).  Also \(|Z_r|\le R\).  Hence
\[
        \ell_k(\mathbf X)-\ell_k(\mathbf Y)
        =
        \sum_{r=1}^k Z_r
\]
is a sum of independent mean-zero random variables bounded in absolute value by \(R\).

Set
\[
        w_k=\max\{1,2Rk^{2/3}\}.
\]
By \Cref{fact:chebyshev},
\[
        q_A^{\otimes k}
        \left(
        |\ell_k(\mathbf X)-\ell_k(\mathbf Y)|>w_k/2
        \right)
        \le
        \frac{4kR^2}{w_k^2}
        \to0.
\]
Thus, for all sufficiently large \(k\),
\begin{equation}\label{eq:chebyshev-half}
        q_A^{\otimes k}
        \left(
        |\ell_k(\mathbf X)-\ell_k(\mathbf Y)|\le w_k/2
        \right)
        \ge
        \frac12.
\end{equation}

Fix such a large \(k\).  For a shift \(\tau\in[0,w_k)\), partition \(\bbR\) into half-open intervals
\[
        I_j^\tau=[\tau+jw_k,\tau+(j+1)w_k),
        \qquad j\in\mathbb Z,
\]
and set
\[
        S_j^\tau=\{x\in A^k:\ell_k(x)\in I_j^\tau\}.
\]
Only finitely many of these sets are non-empty, because \(\ell_k(A^k)\subseteq[k\log a,k\log M]\).  For fixed \((\mathbf X,\mathbf Y)\), the quantity
\[
        \sum_{j\in\mathbb Z}\1_{S_j^\tau}(\mathbf X)\1_{S_j^\tau}(\mathbf Y)
\]
is exactly the indicator that \(\ell_k(\mathbf X)\) and \(\ell_k(\mathbf Y)\) lie in the same shifted interval.  Therefore, using \Cref{fact:shifted-bins} and then \eqref{eq:chebyshev-half},
\[
\begin{aligned}
        \bbE_\tau
        \sum_{j\in\mathbb Z}
        q_A^{\otimes k}(S_j^\tau\times S_j^\tau)
        &\ge
        \frac12
        q_A^{\otimes k}
        \left(
        |\ell_k(\mathbf X)-\ell_k(\mathbf Y)|\le w_k/2
        \right) \\
        &\ge
        \frac14.
\end{aligned}
\]
Hence there exists a shift \(\tau_k\in[0,w_k)\) such that
\begin{equation}\label{eq:diagonal-bin-mass}
        \sum_{j\in\mathbb Z}
        q_A^{\otimes k}(S_j^{\tau_k}\times S_j^{\tau_k})
        \ge
        \frac14.
\end{equation}
Since \(q_A=q|_{A\times A}/\theta\), the restriction of \(q^{\otimes k}\) to \(A^k\times A^k\) is \(\theta^kq_A^{\otimes k}\).  Thus \eqref{eq:diagonal-bin-mass} gives
\begin{equation}\label{eq:q-diagonal-sum}
        \sum_{j\in\mathbb Z}
        q^{\otimes k}(S_j^{\tau_k}\times S_j^{\tau_k})
        \ge
        \frac{\theta^k}{4}.
\end{equation}
On the other hand, the sets \(S_j^{\tau_k}\) partition \(A^k\), so
\[
        \sum_{j\in\mathbb Z}
        \pi^{\otimes k}(S_j^{\tau_k})
        =
        \pi(A)^k
        \le1.
\]
By \Cref{fact:marginal-ac}, every term with \(\pi^{\otimes k}(S_j^{\tau_k})=0\) also has \(q^{\otimes k}(S_j^{\tau_k}\times S_j^{\tau_k})=0\).  Hence a pigeonhole argument applied to \eqref{eq:q-diagonal-sum} gives an index \(j=j(k)\) with \(\pi^{\otimes k}(S_j^{\tau_k})>0\) such that, for
\[
        S_k=S_j^{\tau_k},
\]
one has
\begin{equation}\label{eq:q-over-pi-bin}
        q^{\otimes k}(S_k\times S_k)
        \ge
        \frac{\theta^k}{4}\,
        \pi^{\otimes k}(S_k).
\end{equation}
Because \(\pi^{\otimes k}(S_k)>0\) and \(\phi\le M\),
\[
        \pi^{\otimes k}(S_k)
        =
        \int_{S_k}\Phi_k(x)^2\dd\mu^k(x)
        \le
        M^{2k}\mu^k(S_k),
\]
so \(\alpha_k:=\mu^k(S_k)>0\).  Therefore this set is admissible in the supremum defining \(D_k(W)\).

Write the interval defining \(S_k\) as
\[
        I_j^{\tau_k}=[r_k,r_k+w_k).
\]
Then for every \(x\in S_k\),
\begin{equation}\label{eq:Phi-bin}
        e^{r_k}\le \Phi_k(x)<e^{r_k+w_k}.
\end{equation}
Moreover, by \eqref{eq:pi-q-graphon},
\[
        \dd q^{\otimes k}(x,y)
        =
        \rho^{-k}W^{\otimes k}(x,y)\Phi_k(x)\Phi_k(y)
        \dd\mu^k(x)\dd\mu^k(y),
\]
and
\[
        \dd\pi^{\otimes k}(x)=\Phi_k(x)^2\dd\mu^k(x).
\]
Put
\[
        B_k=\int_{S_k\times S_k}W^{\otimes k}(x,y)\dd\mu^k(x)\dd\mu^k(y).
\]
Using the two Radon--Nikodym identities and \eqref{eq:Phi-bin},
\[
        B_k
        =
        \rho^k
        \int_{S_k\times S_k}
        \Phi_k(x)^{-1}\Phi_k(y)^{-1}\dd q^{\otimes k}(x,y)
        \ge
        \rho^k e^{-2(r_k+w_k)}q^{\otimes k}(S_k\times S_k),
\]
while
\[
        \alpha_k
        =
        \int_{S_k}\Phi_k(x)^{-2}\dd\pi^{\otimes k}(x)
        \le
        e^{-2r_k}\pi^{\otimes k}(S_k).
\]
Combining these estimates with \eqref{eq:q-over-pi-bin} gives
\begin{equation}\label{eq:dbar-pre-absorb}
        \frac{B_k}{\alpha_k}
        \ge
        \frac14(\rho\theta)^k e^{-2w_k}.
\end{equation}
Since \(S_k\) has positive \(\mu^k\)-measure, the definition of \(D_k(W)\) gives
\[
        D_k(W)
        \ge
        \frac{B_k}{\alpha_k}
        \ge
        \frac14(\rho\theta)^k e^{-2w_k}
\]
for all sufficiently large \(k\), with \(a\) fixed.  Since \(w_k=o(k)\),
\[
        \liminf_{k\to\infty}D_k(W)^{1/k}
        \ge
        \rho\theta.
\]
This holds for every \(a>0\) with \(\theta=q(A_a\times A_a)>0\).  Letting \(a\downarrow0\) and using \eqref{eq:theta-to-one}, we obtain
\[
        \liminf_{k\to\infty}D_k(W)^{1/k}\ge\rho.
\]
Together with \eqref{eq:Dk-upper}, this proves the theorem.
\end{proof}

\section{Spectral transfer}\label{sec:spectral-transfer}

\begin{proof}[Proof of \Cref{thm:main}]
The implication from spectral \(\C\)-Sidorenko to \(\C\)-Sidorenko follows from \eqref{eq:p-le-rho}.  Indeed, if \(W\ne0\), then \(p(W)>0\), and since \(2e(H)-v(H)\ge0\),
\[
        \rho(W)^{2e(H)-v(H)}p(W)^{v(H)-e(H)}
        =
        p(W)^{e(H)}\left(\frac{\rho(W)}{p(W)}\right)^{2e(H)-v(H)}
        \ge
        p(W)^{e(H)}.
\]
If \(W=0\), the ordinary inequality is trivial.

Conversely, suppose that \(H\) is \(\C\)-Sidorenko.  Fix a non-zero \(W\in\C\), and write
\[
        v=v(H),\qquad e=e(H),\qquad p=p(W),\qquad \rho=\rho(W).
\]
Then \(p>0\).  Let \(\eta\in(0,\rho)\) be arbitrary.  By \Cref{thm:weak-spectral}, for all sufficiently large \(k\) the supremum \(D_k(W)\) is larger than \((\rho-\eta/2)^k\).  Hence, by the definition of the supremum, there is a measurable set \(S_k\subseteq\Omega^k\) with \(\alpha_k=\mu^k(S_k)>0\) such that, with
\[
        B_k=\int_{S_k\times S_k}W^{\otimes k},
        \qquad
        \bar d_k=\frac{B_k}{\alpha_k},
\]
one has
\begin{equation}\label{eq:dbar-lower-main}
        \bar d_k\ge(\rho-\eta)^k.
\end{equation}
By non-negativity of \(W\), this same set satisfies the automatic mass bound
\begin{equation}\label{eq:mass-upper-main}
        B_k
        \le
        \int_{\Omega^k\times\Omega^k}W^{\otimes k}
        =
        p^k.
\end{equation}
Let
\[
        U_k=(W^{\otimes k})[S_k].
\]
By admissibility, \(U_k\in\C\), and
\[
        p(U_k)=\frac{B_k}{\alpha_k^2}.
\]
Using non-negativity, tensor multiplicativity, and the \(\C\)-Sidorenko inequality for \(U_k\), we get
\[
\begin{aligned}
        t(H,W)^k
        &= t(H,W^{\otimes k}) \\
        &\ge
        \alpha_k^v t(H,U_k) \\
        &\ge
        \alpha_k^v p(U_k)^e \\
        &=
        \alpha_k^v\left(\frac{B_k}{\alpha_k^2}\right)^e
        =
        \left(\frac{B_k}{\alpha_k}\right)^{2e-v}B_k^{v-e}.
\end{aligned}
\]
Since \(2e-v\ge0\) and \(v-e\le0\), \eqref{eq:dbar-lower-main} and \eqref{eq:mass-upper-main} imply
\[
        t(H,W)^k
        \ge
        (\rho-\eta)^{k(2e-v)}p^{k(v-e)}.
\]
Taking \(k\)-th roots gives
\[
        t(H,W)
        \ge
        (\rho-\eta)^{2e-v}p^{v-e}.
\]
Finally let \(\eta\downarrow0\).  We obtain
\[
        t(H,W)
        \ge
        \rho^{2e-v}p^{v-e},
\]
as required.
\end{proof}

\section{Quantitative near-equality stability}\label{sec:stability}

We record two quantitative consequences of the tensor-amplification method.  The first is a degree-biased entropy stability statement in the optimal non-matching range \(2e(H)>v(H)\).  The second is a stronger \(L^2\)-degree stability statement in the spectral range \(v(H)\le e(H)\).

\begin{theorem}[Degree-biased entropy stability]\label{thm:degree-entropy-stability}
Let \(\C\) be admissible, and let \(H\) be a \(\C\)-Sidorenko graph.  Put
\[
        v=v(H),\qquad e=e(H),\qquad s=2e-v.
\]
Assume \(s>0\).  Let \(W\in\C\) have \(p=p(W)>0\), and define
\[
        R=R_H(W)=\frac{t(H,W)}{p(W)^e}.
\]
Let
\[
        \dd\nu(x)=\frac{\deg_W(x)}p\dd\mu(x)
\]
be the degree-biased probability measure associated with \(W\).  Then
\[
        \Ent(\nu\mid\mu)
        \le
        \frac{\log R}s.
\]
Consequently,
\[
        \|\nu-\mu\|_{\mathrm{TV}}
        \le
        \sqrt{\frac{\log R}{2s}}.
\]
Equivalently, for every measurable \(A\subseteq\Omega\),
\[
        \left|
        \int_{A\times\Omega}W(x,y)\dd\mu(x)\dd\mu(y)
        -
        p\mu(A)
        \right|
        \le
        p\sqrt{\frac{\log R}{2s}}.
\]
\end{theorem}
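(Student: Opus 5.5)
The plan is to quantify the proof of \Cref{thm:equality-regular}: replace the law-of-large-numbers separation by a large-deviation estimate, and take typical sets for \(\nu\) inside \(\Omega^k\). Apply the restriction argument to \(U_k=(W^{\otimes k})[S_k]\in\C\) for suitable \(S_k\subseteq\Omega^k\). As in that proof, with \(\alpha_k=\mu^k(S_k)\) and \(B_k=\int_{S_k\times S_k}W^{\otimes k}\), the \(\C\)-Sidorenko inequality gives
\[
        t(H,W)^k\ge \alpha_k^{v-2e}B_k^e=\alpha_k^{-s}B_k^e .
\]
Dividing by \(p^{ke}\) yields
\[
        R^k\ge \alpha_k^{-s}\bigl(q^{\otimes k}(S_k\times S_k)\bigr)^e,
\]
where \(\dd q=W\dd\mu\dd\mu/p\) is the edge measure, whose marginals are \(\nu\). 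It therefore suffices to find sets \(S_k\) with \(\nu^k(S_k)\to1\) (so \(q^{\otimes k}(S_k\times S_k)\ge 1-2\nu^k(S_k^c)\to1\)) and \(\mu^k(S_k)\le e^{-k(D-o(1))}\), for \(D\) approximating \(\Ent(\nu\mid\mu)\). Taking \(k\)-th roots then gives \(\log R\ge sD-o(1)\).

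To construct \(S_k\), first reduce to finite partitions. For any finite measurable partition \(\mathcal P=\{P_1,\dots,P_m\}\), let \(a_i=\mu(P_i)\) and \(b_i=\nu(P_i)\); note \(b_i=0\) whenever \(a_i=0\) since \(\nu\ll\mu\). Take \(S_k=T_{k,\delta}\) from \Cref{fact:type-bound}. By the weak law of large numbers applied to each cell under \(\nu^k\), \(\nu^k(T_{k,\delta})\to1\) for fixed \(\delta>0\). \Cref{fact:type-bound} gives \(\mu^k(T_{k,\delta})\le (k+1)^m e^{-k(D(b\mid a)-\omega(\delta))}\). The polynomial factor disappears after \(k\)-th roots, so
\[
        \log R\ge s\bigl(D(b\mid a)-\omega(\delta)\bigr).
\]
Letting \(\delta\downarrow0\) gives \(D(b\mid a)\le \log R/s\).

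Finally, pass from partitions to the full entropy using the standard fact that \(\Ent(\nu\mid\mu)=\sup_{\mathcal P}D(\nu|_{\mathcal P}\mid\mu|_{\mathcal P})\). This supremum characterization is the step I expect to need the most care, since the paper has not stated it. I would justify it directly: the density \(f=\deg_W/p\) is bounded by \(1/p\), so the partitions of \(\Omega\) into level sets of \(f\) at mesh \(\varepsilon\) give \(D(b\mid a)\to\int f\log f\,\dd\mu\) by dominated convergence, because \(f\log f\) is bounded. This yields \(\Ent(\nu\mid\mu)\le\log R/s\).

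The total variation bound then follows from Pinsker \eqref{eq:pinsker}. The last displayed form is just the restatement
\[
        \nu(A)-\mu(A)=p^{-1}\int_{A\times\Omega}W-\mu(A),
\]
combined with \(\|\nu-\mu\|_{\mathrm{TV}}=\sup_A|\nu(A)-\mu(A)|\). Note that \(R\ge1\) holds by the \(\C\)-Sidorenko hypothesis, so the square roots are well defined.
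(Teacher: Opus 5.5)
Your proposal is correct and follows the paper's proof essentially step for step: typical sets $T_{k,\delta}$ for $\nu$ relative to a finite partition, the principal-restriction bound $t(H,W)^k\ge \alpha_{k,\delta}^{-s}B_{k,\delta}^{e}$, the type bound on $\mu^k(T_{k,\delta})$, then $k\to\infty$, $\delta\downarrow0$, a supremum over partitions, and finally Pinsker's inequality. The only additions are your explicit level-set argument (using $\deg_W/p\le 1/p$) showing that partition entropies approximate $\Ent(\nu\mid\mu)$, which the paper takes as standard, and one point you leave implicit that the paper states: $\mu^k(T_{k,\delta})>0$ for large $k$, since $\nu^k(T_{k,\delta})\to1$ and $\nu\ll\mu$, so the principal restriction is defined.
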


\begin{proof}
Since \(H\) is \(\C\)-Sidorenko, \(R\ge1\).  Define the edge probability measure
\[
        \dd q(x,y)=\frac{W(x,y)}p\dd\mu(x)\dd\mu(y).
\]
Both marginals of \(q\) are \(\nu\).

We first prove the corresponding entropy bound after coarsening by an arbitrary finite measurable partition.  Let
\[
        \mathcal P=\{P_1,\dots,P_m\}
\]
be a finite measurable partition of \(\Omega\).  Put
\[
        a_i=\mu(P_i),
        \qquad
        b_i=\nu(P_i),
\]
and
\[
        D_{\mathcal P}(\nu\mid\mu)
        =
        \sum_{i:a_i>0} b_i\log\frac{b_i}{a_i}.
\]
Since \(\nu\ll\mu\), every cell with \(a_i=0\) also has \(b_i=0\).  We show that
\[
        D_{\mathcal P}(\nu\mid\mu)
        \le
        \frac{\log R}s.
\]

Fix \(\delta>0\).  For \(k\ge1\), let \(T_{k,\delta}\subseteq\Omega^k\) be the set of points whose empirical distribution with respect to \(\mathcal P\) is \(\delta\)-close to \(b\), namely
\[
        T_{k,\delta}
        =
        \left\{
        x=(x_1,\dots,x_k):
        \left|
        \frac1k|\{r:x_r\in P_i\}|-b_i
        \right|
        \le \delta
        \text{ for every }i
        \right\}.
\]
By the law of large numbers,
\[
        \nu^k(T_{k,\delta})\to1.
\]
Since both marginals of \(q^{\otimes k}\) are \(\nu^k\), it follows that
\[
        q^{\otimes k}(T_{k,\delta}\times T_{k,\delta})
        \ge
        1-2\nu^k(T_{k,\delta}^c)
        \to1.
\]
By \Cref{fact:type-bound}, there is \(\omega_{\mathcal P}(\delta)\to0\) as \(\delta\downarrow0\) such that
\begin{equation}\label{eq:type-upper-stability}
        \mu^k(T_{k,\delta})
        \le
        (k+1)^m
        \exp\left(
        -k\bigl(D_{\mathcal P}(\nu\mid\mu)-\omega_{\mathcal P}(\delta)\bigr)
        \right).
\end{equation}
Set
\[
        \alpha_{k,\delta}=\mu^k(T_{k,\delta}),
        \qquad
        \beta_{k,\delta}=q^{\otimes k}(T_{k,\delta}\times T_{k,\delta}),
\]
and
\[
        B_{k,\delta}
        =
        \int_{T_{k,\delta}\times T_{k,\delta}}W^{\otimes k}.
\]
Then \(B_{k,\delta}=p^k\beta_{k,\delta}\).  Let \(U_{k,\delta}=(W^{\otimes k})[T_{k,\delta}]\).  Whenever \(\alpha_{k,\delta}>0\), admissibility gives \(U_{k,\delta}\in\C\), and the same principal-restriction calculation as in the proof of \Cref{thm:equality-regular} gives
\[
        t(H,W)^k
        \ge
        \alpha_{k,\delta}^{v-2e}B_{k,\delta}^e
        =
        \alpha_{k,\delta}^{-s}p^{ke}\beta_{k,\delta}^e.
\]
For large \(k\), \(\alpha_{k,\delta}>0\) because \(\nu^k(T_{k,\delta})\to1\) and \(\nu\ll\mu\).  Dividing by \(p^{ke}\), taking logarithms, and dividing by \(k\), we obtain
\[
        \log R
        \ge
        -\frac{s}{k}\log\alpha_{k,\delta}
        +
        \frac{e}{k}\log\beta_{k,\delta}.
\]
Using \eqref{eq:type-upper-stability}, \(\beta_{k,\delta}\to1\), and then letting \(k\to\infty\), we get
\[
        \log R
        \ge
        s\bigl(D_{\mathcal P}(\nu\mid\mu)-\omega_{\mathcal P}(\delta)\bigr).
\]
Now let \(\delta\downarrow0\).  Hence
\[
        D_{\mathcal P}(\nu\mid\mu)
        \le
        \frac{\log R}s.
\]
Taking the supremum over all finite measurable partitions \(\mathcal P\) gives
\[
        \Ent(\nu\mid\mu)
        \le
        \frac{\log R}s.
\]
Pinsker's inequality \eqref{eq:pinsker} gives the stated total-variation estimate.  Finally,
\[
        \int_{A\times\Omega}W(x,y)\dd\mu(x)\dd\mu(y)
        =
        \int_A \deg_W(x)\dd\mu(x)
        =
        p\nu(A),
\]
so the degree-volume estimate follows.
\end{proof}

\begin{corollary}[Perron \(L^2\)-degree stability]\label{cor:perron-l2-stability}
Let \(\C\) be admissible, and let \(H\) be a \(\C\)-Sidorenko graph.  Put
\[
        v=v(H),\qquad e=e(H),\qquad s=2e-v.
\]
Assume \(v\le e\).  Let \(W\in\C\) be non-zero, put \(p=p(W)\), and define
\[
        R=R_H(W)=\frac{t(H,W)}{p(W)^e}.
\]
Then
\[
        \left\|\frac{\deg_W}{p}-\1\right\|_2^2
        \le
        R^{2/s}-1.
\]
Consequently, for every measurable \(A\subseteq\Omega\),
\[
        \left|
        \int_{A\times\Omega}W(x,y)\dd\mu(x)\dd\mu(y)
        -
        p\mu(A)
        \right|
        \le
        p\sqrt{\mu(A)(1-\mu(A))}
        \sqrt{R^{2/s}-1}.
\]
\end{corollary}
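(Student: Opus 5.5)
The plan is to combine the spectral transfer principle with a Rayleigh-quotient lower bound for \(\rho(W)\) in terms of the degree function. Since \(H\) is \(\C\)-Sidorenko and \(v\le e\), \Cref{thm:main} makes \(H\) spectrally \(\C\)-Sidorenko, so for the non-zero \(W\in\C\)
\[
        t(H,W)\ge \rho^{s}p^{v-e},\qquad \rho=\rho(W).
\]
Dividing by \(p^e\) gives \(R\ge(\rho/p)^{s}\). If \(s=0\) then \(v=2e\le e\) is impossible, so \(s>0\) and \((\rho/p)^2\le R^{2/s}\).

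Next I will bound \(\rho\) from below by a degree quantity. By \eqref{eq:rayleigh-bound} applied to \(T_W^2\), or simply from the operator norm,
\[
        \rho\,\|\1\|_2\ge\|T_W\1\|_2=\|\deg_W\|_2 .
\]
Hence \(\rho^2\ge\|\deg_W\|_2^2\). Write \(g=\deg_W/p\). Then \(\int g\dd\mu=1\), so
\[
        \|g-\1\|_2^2=\|g\|_2^2-1=\frac{\|\deg_W\|_2^2}{p^2}-1\le\frac{\rho^2}{p^2}-1\le R^{2/s}-1 .
\]
This is the first claim.

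For the set estimate, note that
\[
        \int_{A\times\Omega}W-p\mu(A)=p\int(g-1)\1_A\dd\mu=p\int(g-1)(\1_A-\mu(A))\dd\mu,
\]
where the last step uses \(\int(g-1)\dd\mu=0\). Cauchy--Schwarz together with \(\|\1_A-\mu(A)\|_2^2=\mu(A)(1-\mu(A))\) then gives the stated bound.

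There is no real obstacle. The only points needing care are the use of \Cref{thm:main}, where the sign of \(v-e\) is what lets the \(p\)-power be moved across, and the observation \(\|T_W\1\|_2\le\rho\), which is more elementary than a full Perron argument.
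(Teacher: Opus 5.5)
Your proposal is correct and follows the paper's proof essentially step for step: \Cref{thm:main} gives $\rho(W)/p\le R^{1/s}$, then $\|\deg_W\|_2=\|T_W\1\|_2\le\rho(W)$ together with $\int\deg_W\dd\mu=p$ yields the $L^2$ bound, and Cauchy--Schwarz against $\1_A-\mu(A)$ gives the set estimate. One small correction to your closing remark: the sign $v-e\le 0$ is what makes \Cref{thm:main} hold, whereas dividing by $p^e$ to get $R\ge(\rho/p)^s$ is pure algebra and needs no sign condition.
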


\begin{proof}
Since \(v\le e\), the standing convention gives \(s=2e-v>0\).  By \Cref{thm:main},
\[
        t(H,W)
        \ge
        \rho(W)^s p(W)^{e-s}.
\]
Equivalently,
\[
        R
        \ge
        \left(\frac{\rho(W)}p\right)^s.
\]
Hence \(\rho(W)/p\le R^{1/s}\).  Since \(\deg_W=T_W\1\) and \(\|\1\|_2=1\),
\[
        \|\deg_W\|_2
        =
        \|T_W\1\|_2
        \le
        \rho(W).
\]
Also \(\int\deg_W\dd\mu=p\).  Therefore
\[
\begin{aligned}
        \left\|\frac{\deg_W}{p}-\1\right\|_2^2
        &=
        \frac{\|\deg_W\|_2^2}{p^2}-1  \\
        &\le
        \left(\frac{\rho(W)}p\right)^2-1  \\
        &\le
        R^{2/s}-1.
\end{aligned}
\]
For the set estimate, write
\[
        f=\frac{\deg_W}{p}-\1.
\]
Then \(\int f\dd\mu=0\), and hence
\[
\begin{aligned}
        \left|
        \int_{A\times\Omega}W
        -
        p\mu(A)
        \right|
        &=
        p\left|\int_A f\dd\mu\right| \\
        &=
        p\left|\int_\Omega (\1_A-\mu(A))f\dd\mu\right| \\
        &\le
        p\sqrt{\mu(A)(1-\mu(A))}\,\|f\|_2.
\end{aligned}
\]
The claimed bound follows from the \(L^2\)-estimate.
\end{proof}

\section{Doubly nonnegative graphons and forcing consequences}\label{sec:dnn}

For a bounded symmetric function \(K:\Omega^2\to\bbR\), write \(K\succeq0\) if
\[
        \int f(x)K(x,y)f(y)\dd\mu(x)\dd\mu(y)\ge0
\]
for every bounded measurable real-valued function \(f\).  Since all kernels
considered in this section are bounded, positivity tested on bounded
measurable functions extends by truncation and \(L^2\)-continuity to all
real-valued \(L^2\) functions; we use this extension below without further
comment.  A graphon \(W\) is \emph{doubly nonnegative} if \(W\succeq0\).
Equivalently, it is non-negative pointwise as a graphon and positive
semidefinite as a kernel.  Let \(\DNN\) denote the class of doubly
nonnegative graphons.  This terminology is due to Sidorenko
\cite{Sidorenko2021DNN}; we use the same Sidorenko-good convention as in
\cite{Zhao2026LpKNRS}.  For bounded kernels \(K\), the quantity \(t(F,K)\)
is defined by the same integral formula as for graphons.

\begin{lemma}\label{lem:dnn-admissible}
The class \(\DNN\) is admissible.
\end{lemma}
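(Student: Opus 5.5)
We check (A1) and (A2) directly from the definition: a graphon $W$ lies in $\DNN$ exactly when it is $[0,1]$-valued, symmetric and measurable, and satisfies $\langle f,T_Wf\rangle\ge0$ for every bounded measurable $f$. In both cases the graphon properties are immediate. A tensor power $W^{\otimes k}$ is a product of $[0,1]$-valued symmetric measurable functions, hence again one. A restriction $W[S]$ takes the same values as $W$. So the only real work is positive semidefiniteness, and we handle restriction first because it is easier.

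\textbf{Principal restriction.} Let $S\subseteq\Omega$ with $\alpha=\mu(S)>0$, and let $f$ be a bounded measurable function on $S$. Extend $f$ by zero to a function $\tilde f$ on $\Omega$. Then
\[
\int_{S\times S} f(x)W(x,y)f(y)\,\frac{d\mu(x)}{\alpha}\frac{d\mu(y)}{\alpha}
=\alpha^{-2}\langle \tilde f,T_W\tilde f\rangle\ge0 .
\]
This gives $W[S]\succeq0$. The underlying space $(S,\mu|_S/\alpha)$ is again standard and atomless, so $W[S]$ is a graphon in the sense of \Cref{def:admissible}.

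\textbf{Tensor powers.} By induction it suffices to show that $U\otimes V$ is positive semidefinite whenever $U$ and $V$ are positive semidefinite graphons. The operator $T_{U\otimes V}$ on $L^2(\Omega\times\Omega')=L^2(\Omega)\otimes L^2(\Omega')$ equals $T_U\otimes T_V$. It is enough to check this on simple tensors $g\otimes h$; the identity then extends by density and boundedness. The tensor product of two bounded positive operators is positive. One way to see this is $T_U\otimes T_V=(T_U^{1/2}\otimes T_V^{1/2})^2$, the square of a self-adjoint operator. Alternatively, since both operators are compact and self-adjoint, we can use their spectral decompositions. Take orthonormal eigenbases $\{e_i\}$ and $\{e'_j\}$ with eigenvalues $\lambda_i\ge0$ and $\lambda'_j\ge0$. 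Then $T_U\otimes T_V$ is diagonal in the basis $\{e_i\otimes e'_j\}$ with eigenvalues $\lambda_i\lambda'_j\ge0$, after also accounting for the kernel parts.

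Applying this to $f$ bounded on $\Omega^k$, which lies in $L^2$, gives $\langle f,T_{W^{\otimes k}}f\rangle\ge0$. Here we use the extension of positivity from bounded test functions to $L^2$ noted just before the lemma.

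\textbf{Main obstacle.} The main technical point is the identification $T_{U\otimes V}=T_U\otimes T_V$ together with positivity of the Hilbert tensor product. This is routine once stated carefully. An alternative route avoids operator tensor products entirely: expand $U(x,y)=\sum_i\lambda_i e_i(x)e_i(y)$ in $L^2$. The product kernel then has an $L^2(\Omega^2\times\Omega'^2)$-convergent expansion with nonnegative coefficients $\lambda_i\lambda'_j$. Integrating against $f\otimes f$ shows the quadratic form is $\sum_{i,j}\lambda_i\lambda'_j\langle f,e_i\otimes e'_j\rangle^2\ge0$. We would use this as a fallback if the operator argument needs more justification.
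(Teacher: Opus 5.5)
Your proof is correct and takes the same route as the paper. Restriction is handled by extending $f$ by zero to get $\alpha^{-2}\langle\tilde f,T_W\tilde f\rangle\ge0$, and tensor powers by positivity of $T_U\otimes T_V$ on $L^2(\Omega)\otimes L^2(\Omega')$; your factorization $T_U\otimes T_V=(T_U^{1/2}\otimes T_V^{1/2})^2$ simply gives a clean justification of the step the paper sketches as ``first for finite sums of simple tensors and then by $L^2$-density.''
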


\begin{proof}
Tensor powers of positive operators are positive, first for finite sums of
simple tensors and then by \(L^2\)-density, so \(W\in\DNN\) implies
\(W^{\otimes k}\in\DNN\) for every \(k\ge1\).  If \(S\subseteq\Omega\)
has positive measure \(\alpha\), and \(f\in L^2(S,\mu|_S/\alpha)\), extend
\(f\) by zero outside \(S\), obtaining \(\widetilde f\).  The quadratic
form of \(W[S]\), computed with the normalized measure on \(S\), is
\[
        \alpha^{-2}\langle \widetilde f,T_W\widetilde f\rangle\ge0.
\]
Hence \(W[S]\succeq0\).  The range \([0,1]\) is clearly preserved by tensor powers and principal restrictions.
\end{proof}

\begin{definition}[Sidorenko-good]\label{def:sid-good}
A graph \(F\) is called \emph{Sidorenko-good} if
\[
        t(F,K)
        \ge
        \|K\|_1^{e(F)}
\]
for every bounded doubly nonnegative kernel \(K:\Omega^2\to[0,\infty)\).  A Sidorenko-good graph \(F\) is \emph{Sidorenko-good-forcing} if equality for a bounded doubly nonnegative kernel \(K\) with \(\|K\|_1\in(0,\infty)\) implies that
\[
        K=\|K\|_1\1
        \qquad\text{almost everywhere.}
\]
\end{definition}

\begin{corollary}[Spectral equivalence for Sidorenko-good graphs]\label{cor:sid-good-equivalence}
Let \(F\) be a finite graph with \(v(F)\le e(F)\).  Then \(F\) is Sidorenko-good if and only if, for every non-zero bounded doubly nonnegative kernel \(K\),
\[
        t(F,K)
        \ge
        \rho(K)^{2e(F)-v(F)}\|K\|_1^{v(F)-e(F)},
\]
where \(\rho(K)\) denotes the operator norm of \(T_K\).
\end{corollary}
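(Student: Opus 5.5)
The plan is to reduce \Cref{cor:sid-good-equivalence} to \Cref{thm:main} applied to the admissible class \(\DNN\) (\Cref{lem:dnn-admissible}), using scaling to pass between bounded kernels and graphons. The first step is a homogeneity check. For a bounded doubly nonnegative kernel \(K\) and \(c>0\), we have \(t(F,cK)=c^{e(F)}t(F,K)\), \(\|cK\|_1=c\|K\|_1\), and \(\rho(cK)=c\rho(K)\). Hence both the Sidorenko-good inequality and the spectral inequality are homogeneous of degree \(e(F)\) in \(K\): the right-hand side of the spectral one scales like \(c^{(2e-v)+(v-e)}=c^{e}\). Choosing \(c=1/\|K\|_\infty\) (or any \(c\) with \(cK\le1\)) turns \(K\) into a graphon \(W=cK\). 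This \(W\) is still positive semidefinite, so \(W\in\DNN\). Moreover \(\|W\|_1=p(W)\), since \(W\ge0\). Conversely, every element of \(\DNN\) is itself a bounded doubly nonnegative kernel.

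Next I will show that \(F\) is Sidorenko-good if and only if \(F\) is \(\DNN\)-Sidorenko. If \(F\) is Sidorenko-good, then restricting to kernels with values in \([0,1]\) gives the \(\DNN\)-Sidorenko property directly. Conversely, given a bounded \(K\), I rescale to \(W=cK\), apply the \(\DNN\)-Sidorenko inequality to \(W\), and undo the scaling by homogeneity. The zero kernel is trivial. The same argument shows that the spectral inequality for all non-zero bounded doubly nonnegative kernels is equivalent to spectral \(\DNN\)-Sidorenko. The only point to check is that \(\rho(K)\), defined as the operator norm of \(T_K\), agrees with the graphon \(\rho\) after scaling, which is immediate from \(T_{cK}=cT_K\).

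Finally, since \(v(F)\le e(F)\) and \(\DNN\) is admissible, \Cref{thm:main} states that \(F\) is \(\DNN\)-Sidorenko if and only if it is spectrally \(\DNN\)-Sidorenko. Chaining the two equivalences from the previous paragraph gives the corollary.

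I do not expect a real obstacle. The one point needing care is the measure-space setup. Kernels in \Cref{def:sid-good} are on some \((\Omega,\mu)\), and the paper's graphons live on standard atomless probability spaces. So I should either assume \(K\) lives on such a space, or note that \(t\), \(\|\cdot\|_1\) and \(\rho\) are invariant under measure-preserving maps. The second route handles, for example, atoms by passing to \(\Omega\times[0,1]\) via pullback along the projection. This pullback preserves positive semidefiniteness and all three quantities, so the reduction to \(\DNN\) goes through.
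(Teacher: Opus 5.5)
Your proposal is correct and follows essentially the paper's route: the forward direction is \Cref{thm:main} applied to the admissible class \(\DNN\), followed by rescaling \(K\) by \(\|K\|_\infty\). For the converse the paper argues directly from \(\rho(K)\ge\|K\|_1\), which is the same content as the easy direction of \Cref{thm:main} that you invoke after rescaling; your pullback to \(\Omega\times[0,1]\) to handle atoms is a harmless refinement that the paper leaves implicit.
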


\begin{proof}
Assume first that \(F\) is Sidorenko-good.  Then \(F\) is \(\DNN\)-Sidorenko for the admissible graphon class \(\DNN\).  By \Cref{thm:main}, the spectral inequality holds for every non-zero doubly nonnegative graphon.  If \(K\) is an arbitrary non-zero bounded doubly nonnegative kernel and \(M=\|K\|_\infty\), then \(W=K/M\) is a doubly nonnegative graphon.  Applying the graphon inequality to \(W\) and multiplying by \(M^{e(F)}\) gives the displayed inequality for \(K\).

Conversely, the displayed spectral inequality implies the Sidorenko-good inequality because \(\rho(K)\ge\|K\|_1\) for every non-negative \(K\), and the zero kernel is trivial.
\end{proof}

\subsection{Regular local density and Sidorenko-good forcing}

Let \(\rho_0\in[0,1]\).  A graphon \(W:[0,1]^2\to[0,1]\) is \emph{\(\rho_0\)-locally dense} if
\begin{equation}\label{eq:locally-dense}
        \int_{S\times S} W(x,y)\dd x\dd y
        \ge
        \rho_0\lambda(S)^2
        \qquad\text{for every measurable }S\subseteq[0,1].
\end{equation}
It is \emph{\(\rho_0\)-regular} if \(\deg_W=\rho_0\) almost everywhere.  A graphon is \emph{regular locally dense} if it is \(\rho_0\)-locally dense and \(\rho_0\)-regular for some \(\rho_0\in[0,1]\).

\begin{definition}[Regular-KNRS and regular-KNRS-forcing]\label{def:regular-knrs}
A graph \(F\) is \emph{regular-KNRS} if, for every \(\rho_0\in[0,1]\) and every \(\rho_0\)-locally dense, \(\rho_0\)-regular graphon \(W\),
\[
        t(F,W)\ge \rho_0^{e(F)}.
\]
It is \emph{regular-KNRS-forcing} if equality in the last display, for \(0<\rho_0<1\), implies that \(W=\rho_0\1\) almost everywhere.  The endpoint cases are excluded only to avoid degenerate equality cases.
\end{definition}

\begin{lemma}[Regular local density and positive semidefiniteness]\label{lem:regld-dnn}
The following statements hold.
\begin{enumerate}[label=(\alph*)]
    \item If \(W:[0,1]^2\to[0,1]\) is \(\rho_0\)-locally dense and \(\rho_0\)-regular, then \(W-\rho_0\1\succeq0\).  Consequently, \(W\in\DNN\).
    \item If \(K\) is a bounded doubly nonnegative kernel, \(p=\|K\|_1\), and \(K\) is \(p\)-regular, then \(K-p\1\succeq0\).  In particular, for every measurable set \(S\),
    \[
        \int_{S\times S}K(x,y)\dd\mu(x)\dd\mu(y)
        \ge
        p\mu(S)^2.
    \]
\end{enumerate}
\end{lemma}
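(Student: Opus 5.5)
For part (a), the plan is to set \(K=W-\rho_0\1\) and show \(\langle g,T_Kg\rangle\ge0\) for every bounded measurable \(g\). Two features of \(K\) will be used. First, regularity gives \(T_K\1=\deg_W-\rho_0\1=0\) almost everywhere, so \(K\) annihilates constants. Second, local density \eqref{eq:locally-dense} says exactly that \(\langle \1_S,T_K\1_S\rangle\ge0\) for every measurable \(S\). The argument then passes from indicators to general functions in two steps.

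\emph{Step 1 (from indicators to \([0,1]\)-valued functions).} Let \(g:[0,1]\to[0,1]\) be measurable. Since Lebesgue measure is atomless, I will choose measurable sets \(S_n\) with \(\1_{S_n}\to g\) weakly in \(L^2\). For instance, split each cell of a fine partition on which \(g\) is nearly constant into many tiny intervals, and keep in each tiny interval an initial piece of relative length \(g\). Because \(K\) is bounded, \(T_K\) is compact, so \(T_K\1_{S_n}\to T_Kg\) in norm. Since \(\1_{S_n}\) converges weakly and \(T_K\1_{S_n}\) converges strongly,
\[
\langle\1_{S_n},T_K\1_{S_n}\rangle\to\langle g,T_Kg\rangle .
\]
Hence \(\langle g,T_Kg\rangle\ge0\). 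This weak-approximation step is the only point needing care; I expect it to be the main technical obstacle, though it is standard.

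\emph{Step 2 (general bounded functions).} Given a bounded real \(f\), pick \(t\) with \(f+t\ge0\), and write \(f+t=\lambda g\) with \(\lambda>0\) and \(0\le g\le1\). Since \(T_K\1=0\) and \(T_K\) is self-adjoint, adding constants does not change the quadratic form:
\[
\langle f,T_Kf\rangle=\langle f+t,T_K(f+t)\rangle=\lambda^2\langle g,T_Kg\rangle\ge0 .
\]
Thus \(K\succeq0\). Moreover \(W=K+\rho_0\1\) is a sum of two positive semidefinite kernels, since \(\langle f,\1\,f\rangle=(\int f)^2\ge0\). Together with \(0\le W\le1\), this gives \(W\in\DNN\).

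For part (b), regularity gives \(T_K\1=p\1\), so \(\1\) is an eigenvector of the self-adjoint operator \(T_K\). Hence \(T_K\) preserves both \(\mathrm{span}(\1)\) and \(\1^\perp\). For \(f=c\1+h\) with \(h\perp\1\) and \(c=\int f\), we get
\[
\langle f,T_{K-p\1}f\rangle=\langle f,T_Kf\rangle-pc^2=pc^2+\langle h,T_Kh\rangle-pc^2=\langle h,T_Kh\rangle\ge0,
\]
using \(K\succeq0\). So \(K-p\1\succeq0\). Testing this on \(f=\1_S\) gives \(\int_{S\times S}K\ge p\mu(S)^2\).
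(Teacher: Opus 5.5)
Your proof is correct and takes essentially the same route as the paper: part (b) matches the paper's argument (split $f=c\1+h$ along the $T_K$-invariant decomposition $\mathrm{span}(\1)\oplus\1^\perp$). Part (a) follows the two-step structure the paper cites from Brada\v{c}--Sudakov--Wigderson (local density gives copositivity of $W-\rho_0\1$, and regularity upgrades this to positive semidefiniteness), which you prove inline via weak approximation by indicators and compactness of $T_K$. One wording fix in Step 1: a general measurable $g$ is not nearly constant on the cells of an interval partition, so instead keep in each tiny interval $J$ an initial piece of length $\int_J g$ and check weak convergence against continuous test functions (or first approximate $g$ in $L^2$ by interval step functions).
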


\begin{proof}
Part (a) is the regular locally dense graphon implication from the
copositive characterization.  In that characterization,
\(\rho_0\)-local density gives copositivity of \(W-\rho_0\1\), and under
\(\rho_0\)-regularity this centered kernel is positive semidefinite; see
\cite[Lemma 2.13 and Corollary 2.15]{BradacSudakovWigderson2024}.  Since
the constant kernel \(\rho_0\1\) is positive semidefinite and \(W\ge0\),
the graphon \(W\) is doubly nonnegative.

For part (b), \(p\)-regularity gives \(T_K\1=p\1\).  Since \(T_K\) is self-adjoint, the orthogonal complement \(\1^\perp\) is invariant under \(T_K\).  Every \(f\in L^2\) can be written as \(f=c\1+g\), where \(g\perp\1\).  The operator with kernel \(K-p\1\) kills the constant direction and agrees with \(T_K\) on \(\1^\perp\).  Therefore
\[
        \langle f,T_{K-p\1}f\rangle
        =
        \langle g,T_Kg\rangle
        \ge0,
\]
because \(K\succeq0\).  Hence \(K-p\1\succeq0\).  Taking \(f=\1_S\) gives the displayed local-density inequality.
\end{proof}

\begin{theorem}[Sidorenko-good forcing versus regular-KNRS forcing]\label{thm:sidgood-regknrs-forcing}
Let \(F\) be Sidorenko-good and not a matching.  Then:
\begin{enumerate}[label=(\roman*)]
    \item \label{item:equality-regld} if a bounded doubly nonnegative kernel \(K\) with \(\|K\|_1>0\) satisfies
    \[
        t(F,K)=\|K\|_1^{e(F)},
    \]
    then \(K\) is \(\|K\|_1\)-regular and \(K-\|K\|_1\1\succeq0\);
    \item \label{item:regular-knrs} \(F\) is regular-KNRS;
    \item \label{item:forcing-equiv} \(F\) is Sidorenko-good-forcing if and only if \(F\) is regular-KNRS-forcing.
\end{enumerate}
\end{theorem}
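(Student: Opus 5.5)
The plan is to reduce each item to results already established, working in the admissible class \(\DNN\) (\Cref{lem:dnn-admissible}) after rescaling kernels to graphons.

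For \ref{item:equality-regld}, let \(K\) be a bounded doubly nonnegative kernel with \(p=\|K\|_1>0\) and \(t(F,K)=p^{e(F)}\). Put \(M=\|K\|_\infty>0\) and \(W=K/M\). Then \(W\in\DNN\) and \(p(W)=p/M\). Both sides of the equality scale by \(M^{e(F)}\), so \(t(F,W)=p(W)^{e(F)}\). Since \(F\) is Sidorenko-good, it is \(\DNN\)-Sidorenko: apply the defining inequality to graphons, which are bounded doubly nonnegative kernels. Since \(F\) is not a matching, \(2e(F)>v(F)\). \Cref{thm:equality-regular} therefore gives that \(W\) is \(p(W)\)-regular, so \(K\) is \(p\)-regular. \Cref{lem:regld-dnn}(b) then gives \(K-p\1\succeq0\).

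For \ref{item:regular-knrs}, take \(\rho_0\in[0,1]\) and a \(\rho_0\)-locally dense, \(\rho_0\)-regular graphon \(W\). By \Cref{lem:regld-dnn}(a), \(W\in\DNN\). Regularity gives \(p(W)=\int\deg_W=\rho_0\). Sidorenko-goodness then yields \(t(F,W)\ge p(W)^{e(F)}=\rho_0^{e(F)}\).

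For \ref{item:forcing-equiv}, I treat the two directions separately.

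\emph{Sidorenko-good-forcing implies regular-KNRS-forcing.} Suppose \(W\) is \(\rho_0\)-locally dense and \(\rho_0\)-regular with \(0<\rho_0<1\), and \(t(F,W)=\rho_0^{e(F)}\). As in \ref{item:regular-knrs}, \(W\) is a bounded doubly nonnegative kernel with \(\|W\|_1=\rho_0>0\) attaining equality. Forcing gives \(W=\rho_0\1\).

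\emph{Regular-KNRS-forcing implies Sidorenko-good-forcing.} Let \(K\) attain equality with \(p=\|K\|_1\in(0,\infty)\). By \ref{item:equality-regld}, \(K\) is \(p\)-regular and \(K-p\1\succeq0\). Testing the latter on \(\1_S\) shows that \(K\) is \(p\)-locally dense in the kernel sense. Here is the step I expect to be the main obstacle: the KNRS definitions are stated for graphons on \([0,1]\) with values in \([0,1]\) and \(\rho_0\in(0,1)\), whereas \(K\) may exceed \(1\) and \(p\) need not lie in \((0,1)\).

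I would handle this by scaling. Set \(W=K/c\) with \(c>\max(\|K\|_\infty,p)\), so that \(W\) takes values in \([0,1]\) and \(\rho_0=p/c\in(0,1)\). Local density, regularity and the equality \(t(F,W)=\rho_0^{e(F)}\) are all homogeneous under this scaling. I also transport the underlying standard atomless space to \([0,1]\) by a measure-preserving isomorphism, using the convention in \Cref{def:admissible}. Regular-KNRS-forcing then gives \(W=\rho_0\1\), hence \(K=p\1\) almost everywhere.

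One subtlety in the last step needs care. If \(c\) must strictly exceed \(\|K\|_\infty\), then \(p<\|K\|_\infty\) is automatic unless \(K\) is constant, and the constant case is trivial anyway. So the choice \(\rho_0<1\) is always available.
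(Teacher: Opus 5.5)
Your proposal is correct and follows essentially the same route as the paper: for (i) you rescale to a \(\DNN\) graphon and apply \Cref{thm:equality-regular} and then \Cref{lem:regld-dnn}(b); for (ii) and the forward half of (iii) you use \Cref{lem:regld-dnn}(a); for the converse you combine (i) with a rescaling. The only difference is cosmetic. Dividing by some \(c>\|K\|_\infty\), instead of by \(M=\|K\|_\infty\) with the paper's separate case \(M=p\), makes \(\rho_0=p/c<1\) automatic because \(p\le\|K\|_\infty\) always, so your closing remark about the constant case (and the \(\max\) with \(p\)) is not actually needed.
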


\begin{proof}
We first prove \ref{item:equality-regld}.  Let \(p=\|K\|_1>0\), and suppose that \(K\) is a bounded doubly nonnegative kernel with
\[
        t(F,K)=p^{e(F)}.
\]
Put \(M=\|K\|_\infty\).  Then \(M>0\), and \(W=K/M\) is a doubly nonnegative graphon.  Since \(F\) is Sidorenko-good, \(F\) is \(\DNN\)-Sidorenko.  Moreover,
\[
        t(F,W)=M^{-e(F)}t(F,K)=\left(\frac{p}{M}\right)^{e(F)}=p(W)^{e(F)}.
\]
Because \(F\) is not a matching, \(2e(F)>v(F)\).  Applying \Cref{thm:equality-regular} with \(\C=\DNN\), we get that \(W\) is \(p(W)\)-regular.  Hence \(K\) is \(p\)-regular.  Now \Cref{lem:regld-dnn}(b) gives \(K-p\1\succeq0\).  This proves \ref{item:equality-regld}.

For \ref{item:regular-knrs}, let \(W\) be a \(\rho_0\)-locally dense, \(\rho_0\)-regular graphon.  By \Cref{lem:regld-dnn}(a), \(W\in\DNN\).  Since \(F\) is Sidorenko-good,
\[
        t(F,W)
        \ge
        \|W\|_1^{e(F)}
        =
        \rho_0^{e(F)}.
\]
Thus \(F\) is regular-KNRS.

If \(F\) is Sidorenko-good-forcing, then it is regular-KNRS-forcing by the same argument: every \(\rho_0\)-locally dense, \(\rho_0\)-regular equality case is a doubly nonnegative equality case with \(\|W\|_1=\rho_0\), and Sidorenko-good-forcing gives \(W=\rho_0\1\).

Conversely, suppose \(F\) is regular-KNRS-forcing, and let \(K\) be a bounded doubly nonnegative kernel with \(p=\|K\|_1\in(0,\infty)\) and
\[
        t(F,K)=p^{e(F)}.
\]
By \ref{item:equality-regld}, \(K\) is \(p\)-regular and \(K-p\1\succeq0\).  Put \(M=\|K\|_\infty\).  If \(M=p\), then \(K=p\1\) almost everywhere, since \(0\le K\le M\) and the average of \(K\) is \(p\).  If \(M>p\), then
\[
        W=M^{-1}K
\]
is a graphon that is \((p/M)\)-regular.  Also
\[
        W-(p/M)\1=M^{-1}(K-p\1)\succeq0,
\]
so \(W\) is \((p/M)\)-locally dense by testing the positive-semidefinite centered kernel on indicators.  Moreover,
\[
        t(F,W)
        =
        M^{-e(F)}t(F,K)
        =
        (p/M)^{e(F)}.
\]
Since \(0<p/M<1\), regular-KNRS-forcing gives \(W=(p/M)\1\) almost everywhere, and therefore \(K=p\1\) almost everywhere.  Hence \(F\) is Sidorenko-good-forcing.
\end{proof}

\section{Concluding comments}\label{sec:concluding}

In this paper we established a tensor-power error-amplification method for
Sidorenko-type inequalities in admissible graphon classes.  The starting point
is the multiplicativity of the Sidorenko ratio: if
\[
        R_H(W)=\frac{t(H,W)}{p(W)^{e(H)}},
\]
then
\[
        R_H(W^{\otimes k})=R_H(W)^k.
\]
Thus tensor powers preserve the logarithmic Sidorenko exponent per tensor
coordinate.  At the same time, tensoring can amplify local edge-mass errors.
In the degree-biased argument, a non-regular graphon produces small principal
sets in tensor powers which carry almost all edge mass.  In the Perron-biased
argument, suitable principal sets in tensor powers detect the spectral radius
on the exponential scale.  The admissible-class assumptions are precisely what
allow us to tensor and then pass to these normalized principal restrictions
without leaving the class.

After the degree-biased amplification argument, it is natural to ask whether
this philosophy can be pushed further, for instance toward the problem of
whether every non-forest Sidorenko graph is forcing.  The results of this paper
identify the part that follows from a universal degree-error amplification:
for every admissible class \(\C\), every non-matching \(\C\)-Sidorenko graph
has only regular equality cases.  Consequently,
\[
        \C\text{-forcing}
        \quad\Longleftrightarrow\quad
        \C\text{-regular-forcing}
\]
in this range.  The remaining obstruction is regular non-constancy, which
depends on the particular structure of the graph \(H\).

This viewpoint suggests a broader algebraic problem.  Tensor powers are the
algebraic operation used in this paper; their crucial feature is that they do
not increase the Sidorenko exponent, while they can amplify a chosen error.
Another motivating example is Szegedy's transitive-host reduction
\cite{Szegedy2015}, where highly symmetric host constructions control the
same logarithmic extremal exponent.  These examples suggest that it would be
useful to identify algebraic operations, or algebraically defined host
constructions, under which the Sidorenko exponent is preserved or at least not
increased.

\begin{question}[Exponent-monotone algebraic operations]
Which algebraic operations, or sequences of algebraic operations, on graphons
or on suitable positivity classes of kernels preserve, or at least do not
increase, the logarithmic Sidorenko exponent of a graph \(H\)?  More generally,
for which families of graphs does such exponent-monotonicity hold uniformly
under a given algebraic operation or algebraically defined host construction?
\end{question}

Such operations could lead to new admissibility notions.  One may replace
closure under tensor powers by closure under another exponent-monotone
algebraic operation, together with the corresponding normalized principal
restrictions.  If the operation also amplifies a useful obstruction, the same
error-amplification philosophy may yield new regularization or transfer
principles beyond those proved here.

\bibliographystyle{plainnat}
\bibliography{ref}

\end{document}